\newif\if@restonecol
\newtheorem{theorem}{Theorem}[section]
\newtheorem*{theoremNN}{Theorem}
  \newtheorem{lemma}[theorem]{Lemma}
 \theoremstyle{definition}
 \newtheorem{definition}[theorem]{Definition} 
  \newtheorem{notation}[theorem]{Notation}
\newtheorem{exampleth}[theorem]{Example}
\newenvironment{example}{\begin{exampleth}}{\hfill $\diamond$\\ \end{exampleth}}
\newenvironment{examplethNN}[1]{\par
  \normalfont \topsep6\p@\@plus6\p@\relax
  \trivlist
  \item[\hskip\labelsep
    \textbf{Example #1}\@addpunct{.}]\ignorespaces
}{%
  \endtrivlist\@endpefalse
}
\newcommand \QQ {\mathbb{Q}} 
\newcommand \ZZ {\mathbb{Z}}
\newcommand \CC {\mathbb{C}}
\newcommand \RR {\mathbb{R}}
\newcommand \pr {\mathbb{P}}
\newcommand \OO {\mathcal{O}}
\newcommand \HH{\operatorname{Hom}}
\newcommand \afc {\mathbb{C}}
\newcommand \Sn {\mathbb{S}}
\newcommand \init {\operatorname{in}}
\newcommand \cT {\mathcal{T}}
\newcommand \cP {\mathcal{P}}
\newcommand \cF {\mathcal{F}}
\newcommand \sat{\operatorname{sat}}
\newcommand \TP {\mathbb{T}}
\newcommand \sign {\operatorname{sign}}
\newcommand{\val}{\operatorname{val}}
\newcommand{\ord}{\operatorname{ord}}
\newcommand{\NF}{\mathscr{N}}
\newcommand {\trop} {\operatorname{trop}}
\newcommand {\CF}{\mathscr{C}}
\title{Implicitization of surfaces via geometric tropicalization}
\author{Mar\'ia Ang\'elica Cueto$^\dagger$}
\address{$^\dagger$ FB12 Institut f\"ur Mathematik,
Goethe-Universit\"at Frankfurt, Robert-Mayer-Str. 6-8, Raum 214,60325
Frankfurt am Main,Germany. Phone: +49 (69)-798-28929, Fax: +49 (69) 798-22302.}
\email{macueto@math.uni-frankfurt.de}
\date{\today}
\keywords{elimination theory, tropical geometry, geometric
  tropicalization, toric varieties, resolution diagrams,
  multiplicities} \subjclass[2010]{14T05, 14M25, 68W30}
\begin{document}
\maketitle

\begin{abstract} 
  In this paper we further develop the theory of geometric
  tropicalization due to Hacking, Keel and Tevelev and we describe
  tropical methods for implicitization of surfaces. More precisely, we
  enrich this theory with a combinatorial formula for tropical
  multiplicities of regular points in arbitrary dimension and we prove
  a conjecture of Sturmfels and Tevelev regarding sufficient
  combinatorial conditions to compute tropical varieties via geometric
  tropicalization. Using these two results, we extend previous work of
  Sturmfels, Tevelev and Yu for tropical implicitization of generic
  surfaces, and we provide methods for approaching the non-generic
  cases.
\end{abstract}
\section{Introduction}
\label{SIsec:introduction}

In its ten years of existence, the field of tropical geometry  has
provided new tools to approach questions in algebraic geometry. Among
them, we can include classical elimination and
implicitization problems~\cite{TropDiscr, ElimTheory,
  NPofImplicitEquation,TropImplMixedFibPolytopes}. In the classical
setting, we wish to recover the defining ideal of either the projection of a
subvariety of an algebraic torus or of a parametric variety. In the
tropical setting, we replace the defining ideal by a polyhedral
object, namely, its  tropicalization.  Such methods are known as
\emph{tropical elimination} and \emph{tropical implicitization} and
have been used recently for computations going beyond the power of
classical elimination tools, including multidimensional resultants and
Gr\"obner bases.  Successful applications of tropical implicitization
techniques were presented in \cite{FPSAC2010CL, Mega09}.
 
Tropical geometry is a polyhedral version of classical algebraic
geometry: we replace algebraic varieties over the torus
$\TP^r=(\CC^*)^r$ by weighted, balanced polyhedral fans
. These objects preserve just enough data about the original varieties
to remain meaningful (e.g.\ dimension, degree, etc.), while discarding
much of their complexity. They are also known in the literature as
\emph{Bieri-Groves sets}~\cite{BG}. We can compute them based on Gr\"obner
theory or valuations, depending on how the classical varieties are
presented.  Gr\"obner techniques are better suited for algebraic
descriptions, while valuations provide the right framework in the
presence of geometric information, e.g. a polynomial
parameterization. The newly develop theory of \emph{geometric
  tropicalization}, introduced by Hacking, Keel and Tevelev~\cite[{$\S
  2$}]{HKT}, fits into the latter.

The crux of geometric tropicalization is to read off the
tropicalization of a smooth closed subvariety of a torus directly from
the combinatorics of its boundary in suitable compactification. To do
so, its boundary is required to have simple normal crossings (snc),
that is, to behave locally like an arrangement of coordinate
hyperplanes. More precisely, let $X\subset \TP^r$ be the subvariety
and pick a normal and $\QQ$-factorial compactification $\overline{X}$
where $X$ is an open subvariety of $\overline{X}$ and its divisorial
boundary $\partial\overline{X}=\overline{X}\smallsetminus X$ is a snc
boundary. The combinatorial information of the tropical variety $\cT
X$ is encoded in an abstract simplicial complex, called the boundary
complex $\Delta(\partial\overline{X})$, which resembles the one
in~\cite{BoundaryComplexes}. After assigning coordinates to the
vertices of this complex by means of divisorial valuations, and
extending linearly on cells, we get a complex in the real span of
the cocharacter lattice of $\TP^n$. Geometric tropicalization says
precisely that the support of the tropical fan is the cone over this
complex and, in particular, the result does not depend on our choice
of $\overline{X}$ (Theorem~\ref{TSGthm:geomTrop}).

The circle of ideas behind geometric tropicalization has deep, yet
not explicit, connections to recent articles on tropical algebraic
geometry. As an example, we can mention the work of M.\ Baker relating
linear systems on curves and linear systems on the dual graphs of
their associated semistable regular
models~\cite{SpecializationLemma}. These dual graphs encode the same
combinatorial information as the boundary complexes used in~\cite[{$\S
  2$}]{HKT}. One possible explanation for this phenomenon is that, up
to now, geometric tropicalization was only able to recover the support
of the tropical fan. Tropical multiplicities were missing from this
description and they are essential for recovering information about
the original algebraic varieties from their tropical counterparts.  By
decoration the boundary complex $\Delta(\partial\overline{X})$ with
weight on its maximal cells, we obtain an \emph{explicit combinatorial
  formula} for computing tropical multiplicities
(Theorem~\ref{SIthm:MainTheoremMultiplicitiesSNC}), complementing the
set-theoretic results of~\cite[{$\S 2$}]{HKT}.


As one main expect, the major difficulty in applying these methods to
compute tropical fans in concrete examples lies in the restrictive
assumptions on the compactification $\overline{X}$. One way of
constructing such an object is provided by a strong resolution of $X$
in the sense of Hironaka~\cite[Theorem
3.27]{KollarResolutionBook}. These resolutions are by no means explicit,
explaining the lack of examples in this theory. The algorithmic difficulties of
performing such a task are numerous and it would be desirable to
attenuated the necessary conditions on $\overline{X}$ to obtain $\cT
X$ from the weighted complex $\Delta(\partial \overline{X})$.  After
studying in detail the surface case, Sturmfels and Tevelev conjectured
that the right condition to impose was not geometric but
combinatorial, requiring the boundary components to intersect in the
expected codimension~\cite{ElimTheory}. They called it the
\emph{combinatorial normal crossing} (cnc) boundary condition. This
property ensures that $\Delta(\partial\overline{X})$ is simplicial
and has the right dimension, namely, one less than the dimension of
$X$. 
In this paper, we prove this conjecture in arbitrary dimension,
addressing the question of tropical multiplicities as well. Here is
the precise statement, which we discuss in Section~\ref{sec:geom-trop}
(Theorem~\ref{thm:CNCforHKT}).
\begin{theoremNN}
  Let $X\subset \TP^n$ be a smooth subvariety and let $\overline{X}$
  be a normal and $\QQ$-factorial compactification with
  \emph{combinatorial normal crossing} boundary $\partial
  \overline{X}$. Then, the weighted set $\cT X$ can be computed from
  $\overline{X}$ using the weighted boundary complex $\Delta(\partial
  \overline{X})$ and the divisorial valuations induced by
  $\partial\overline{X}$.
\end{theoremNN}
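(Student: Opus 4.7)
The plan is to reduce to the simple normal crossing case already treated in Theorem~\ref{TSGthm:geomTrop} and Theorem~\ref{SIthm:MainTheoremMultiplicitiesSNC}. Apply Hironaka's strong log resolution to the pair $(\overline{X}, \partial \overline{X})$ to produce a proper birational morphism $\pi\colon \overline{X}' \to \overline{X}$ which is an isomorphism over $X$ and whose boundary $\partial \overline{X}' := \pi^{-1}(\partial \overline{X})$ has simple normal crossings. Since $\pi|_X$ is the identity, the subvariety of $\TP^n$ is unchanged, so Theorem~\ref{TSGthm:geomTrop} together with Theorem~\ref{SIthm:MainTheoremMultiplicitiesSNC} applied to $\overline{X}'$ recover the weighted set $\cT X$ from $\Delta(\partial \overline{X}')$ and the divisorial valuations attached to its vertices.

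Next, I would compare the two weighted complexes. Label the irreducible components of $\partial \overline{X}$ by $D_1, \dots, D_s$ and those of $\partial \overline{X}'$ by the strict transforms $\widetilde{D}_i$ together with exceptional divisors $E_\alpha$ lying over strata of $\partial \overline{X}$. Since $\overline{X}$ is normal and $\QQ$-factorial and $\pi$ is birational, $\val_{\widetilde{D}_i} = \val_{D_i}$ as divisorial valuations on the function field of $X$, so the strict-transform vertices of $\Delta(\partial \overline{X}')$ embed to exactly the lattice points determined by $\Delta(\partial \overline{X})$. Writing $\pi^{*}D_i = \widetilde{D}_i + \sum_\alpha a_{i\alpha} E_\alpha$ with $a_{i\alpha} \in \ZZ_{\geq 0}$, each exceptional valuation $\val_{E_\alpha}$ becomes a non-negative integer combination of those $\val_{D_i}$ for which $\pi(E_\alpha) \subseteq D_i$. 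After embedding into $\RR^n$, the cones of $\Delta(\partial \overline{X}')$ therefore refine those of $\Delta(\partial \overline{X})$. The cnc hypothesis is critical here: it guarantees that every maximal face of $\Delta(\partial \overline{X})$ has $\dim X$ vertices and spans a top-dimensional simplicial cone, so the refinement is one of top-dimensional fans and their supports automatically agree.

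The main obstacle is verifying that the weights attached to a regular point of $\cT X$ by the two complexes coincide. By Theorem~\ref{SIthm:MainTheoremMultiplicitiesSNC}, each top cone $\sigma'$ of $\Delta(\partial \overline{X}')$ carries the lattice index $[\ZZ^n \cap \langle \sigma' \rangle : \ZZ\langle \val_{D'_1}, \dots, \val_{D'_{\dim X}}\rangle]$, where the $D'_j$ are the snc components spanning $\sigma'$. One must show that for each maximal cone $\sigma$ of $\Delta(\partial \overline{X})$, summing these indices over the sub-cones $\sigma' \subseteq \sigma$ produced by $\pi$ reproduces the single lattice index obtained directly from the $\val_{D_i}$ spanning $\sigma$. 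This reduces to a purely local combinatorial statement on one cnc chart: $\QQ$-factoriality and the standard structure of log resolutions realize the refinement $\{\sigma'\} \subseteq \sigma$ as successive stellar subdivisions along rays generated by non-negative integer combinations of the rays of $\sigma$, and the classical additivity of lattice indices under such subdivisions gives the desired equality. Once this identity is in place, both the support and the multiplicities of $\cT X$ are recovered from $\Delta(\partial \overline{X})$ alone, completing the proof.
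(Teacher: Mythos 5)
Your overall strategy coincides with the paper's (resolve the cnc pair to a snc pair by Hironaka and compare the two weighted realized boundary complexes, using the facts $\val_{\widetilde{D}_i}=\val_{D_i}$ and $\val_{E_\alpha}=\sum_i a_{i\alpha}\val_{D_i}$), but the two steps you treat as automatic are exactly where the content of the theorem lies, and as written they have genuine gaps. First, the assertion that the realized complex $\Delta(\partial\overline{X}')$ ``refines'' that of $\Delta(\partial\overline{X})$ and that ``the supports automatically agree'' is unjustified. The valuation formulas only give one containment (each new cone lies inside the cone of the old cell over which the blow-up occurs); they do not give a fan refinement, because the realization map is not injective and the realized cells of the snc complex can overlap and cross each other in their relative interiors (this is precisely the phenomenon in Figure~\ref{fig:BlowupAndGeomTrop}, where the paper must pass to a flag complex $\Gamma$ and an auxiliary coarse fan $\cF$). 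Nor is the reverse containment automatic: after blowing up a point where $s$ boundary divisors meet, their strict transforms typically no longer have a common point, so the old top cell $\sigma_I$ disappears from the new complex, and one must prove that the cone $\RR_{\geq 0}[\sigma_I]$ is still covered, i.e.\ lies in $\cT X$. In the paper this is not a combinatorial triviality; it is deduced from positivity of the multiplicity via Lemma~\ref{lm:MultIdentity} (the weight of a regular point of $\Gamma$ equals $D_1\cdot\ldots\cdot D_s>0$), so the set-theoretic statement actually rests on the weight comparison rather than preceding it. Also, cnc does not guarantee that every maximal cell has $\dim X$ vertices, nor that the realized valuations spanning a top cell are linearly independent; these degenerate cases have to be set aside explicitly, as the paper does.

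Second, your weight comparison reduces to the wrong identity and invokes a mechanism that does not apply. By Theorem~\ref{SIthm:MainTheoremMultiplicitiesSNC} the weight of a maximal cell on the resolution is the \emph{intersection number of the corresponding $s$ divisors} times the lattice index, and after the (non-toric) blow-ups these intersection numbers among strict transforms and exceptional divisors are not $1$; your proposal sums only lattice indices and claims they reproduce a single lattice index, dropping the intersection-theoretic factors on both sides. Moreover, the resolution of $\overline{X}$ at a non-torus-invariant point is not realized combinatorially as successive stellar subdivisions of the cone $\sigma$, and because the realized cells can overlap, ``additivity of lattice indices under stellar subdivision'' is not available. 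The actual argument (Lemma~\ref{lm:MultIdentity}) needs the projection formula $\pi^*(D_1)\cdot\ldots\cdot\pi^*(D_s)=D_1\cdot\ldots\cdot D_s$ and $\pi^*(D_i)\cdot W=0$ for $W$ exceptional, combined with a wall-crossing computation over a coarse fan structure on the cone over $\Gamma$, with careful sign and determinant bookkeeping in formula~\eqref{SIeq:9}, to show that every regular point carries total weight $D_1\cdot\ldots\cdot D_s$. Without an argument of this kind, both the equality of supports and the equality of weights remain unproved in your proposal.
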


\smallskip



Tropical implicitization was pioneered by the work of Sturmfels,
Tevelev and Yu~\cite{NPofImplicitEquation}. Their methods are well
suited for \emph{generic} varieties, and are built on the theory of
geometric tropicalization and the construction of tropical
compactifications in the sense of
Tevelev~\cite{CompactificationsTori}. However, real life is seldom
generic, so it is crucial to attack the \emph{non-generic} versions of
these problems. 
One of the
contributions of this paper is to identify the genericity conditions,
describe certificates for them, and introduce tropical implicitization
methods for non-generic
surfaces. 


In Section~\ref{SIsec:trop-elim-gener} we focus our attention on the
generic setting: surfaces parameterized by Laurent polynomial maps
with fixed support and where we allow the coefficients to vary
generically. Following~\cite{NPofImplicitEquation}, we translate our
tropical implicitization question to the one of compactifying an
arrangement of plane curves in the torus $\TP^2$. These curves are
precisely the vanishing locus of each coordinate of the given
polynomial parameterization.  
The genericity conditions in~\cite{NPofImplicitEquation} are chosen so
that we have a natural choice for our cnc compactification: a smooth
projective toric surface build from the supports of our
parameterization.
Our approach allows to weaken these conditions and still be able to
compute our tropical surface from the input map. As a byproduct, in
Theorem~\ref{thm:TropImplGenericSurfaces} we show that the smoothness
condition on the ambient space is unnecessary to obtain the tropical
surface encoded as a weighted graph.  We illustrate our approach with several
numerical examples in $\afc^3$. These examples are then revisited to
highlight the differences between the techniques applied to generic
and non-generic surfaces.

In Section~\ref{SIsec:trop-elim-non} we discuss tropical
implicitization of non-generic surfaces. We start by clarifying what
we mean by \emph{special} surfaces. Then, we describe a procedure to
obtain the graphs associated to their tropicalization.  Singularities
coming from excessive intersections are the main obstruction to apply
the methods of Section~\ref{SIsec:trop-elim-gener} in this context.
To fix this bad behavior, we first compactify the arrangement of plane
curves inside $\pr^2$. Secondly, since the cnc condition on the
boundary fails to hold, we must resolve the excessive boundary points,
for example, by ordinary blow-ups. This construction yields the
desired cnc compactification. Thus, the corresponding tropical surface
can be obtained using
Theorem~\ref{thm:CNCforHKT}.

We end this paper with some remarks and open questions. As our running
examples illustrate, rational surfaces in $\CC^3$ serve as a nice test
case to explore tropical implicitization techniques. In this setting,
these methods require to analyze the combinatorics of a curve
arrangement in $\TP^2$, and the local behavior at points belonging to
three or more of these curves. Topological methods from singularity
theory can then be applied to predict the resulting tropical
surfaces. Even though the theory of tropical implicitization is at an
early stage and it is still evolving, we expect
Theorems~\ref{SIthm:MainTheoremMultiplicitiesSNC}
and~\ref{thm:CNCforHKT} to become a valuable tool for future
applications.

\section{Geometric tropicalization}
\label{sec:geom-trop}

In this section, we discuss the theory of geometric
tropicalization. We present its original formulation in set-theoretic
terms as in~\cite{HKT}, and we extend it with two results. The first
main result is a \emph{formula for tropical multiplicities}. 
The second one proves a conjecture of Sturmfels and
Tevelev~\cite{ElimTheory} on necessary conditions to compute tropical
varieties from their boundary complexes and their associated
divisorial valuations. More precisely, rather than requiring a simple
normal crossing (snc) boundary, it is enough to require a
\emph{combinatorial normal crossing} (cnc) boundary. The main
advantage of this weaker hypothesis will become evident in
Sections~\ref{SIsec:trop-elim-gener} and~\ref{SIsec:trop-elim-non}.

\begin{notation} Throughout this paper, we fix the following notation.
  Let $\TP^r$ be the $r$-dimensional algebraic torus over a field $k$
  of characteristic zero. Let $\Lambda_r =\HH(k^*, \TP^r)$ be the
  cocharacter lattice and $\Lambda_r^{\vee}=\HH(\TP^r, k^*)$ the
  character lattice. We let $K|k$ be the field of Puiseux
  series with parameter $\varepsilon$ and with valuation
\[
\mathbf{\ord}\colon K\to \RR\cup \{\infty\}, \qquad
\alpha\,\varepsilon^u + (\text{higher order terms}) \mapsto u.
\]
Given a fan $\cF$ in $\RR\otimes \Lambda_r$ we let $X_{\cF}$ be the
associated toric variety, with intrinsic torus $\TP^r$.  As it is
standard in toric geometry, given a cone $\Sigma$ of $\cF$, we let
$V(\Sigma) \subset X_{\cF}$ be the closure of the torus orbit
$\OO_{\Sigma}$, with intrinsic torus $\TP_{\Sigma}$.
\end{notation}

We start by recalling the basics on tropical
geometry~\cite{BJSST}. Our exposition will be coordinate free, but the
reader can safely pick a basis of characters for each $r$-dimensional
algebraic tori and view all tropical varieties in $\RR^r$ rather than
in the $\RR$-span of the cocharacter lattice.

The \emph{tropicalization} of a closed subvariety $X$ of the algebraic
torus $\TP^r$ is a fan in $\RR\otimes \Lambda_r$ with intrinsic
lattice $\Lambda_r$. It is defined as:
\begin{equation}
\cT X = \{w \in \Lambda_r \mid 1\notin \init_w(I_X)\}.\label{eq:1}
\end{equation}
Here, $I_X$ is the defining ideal of $X$ in the Laurent polynomial
ring $k[\Lambda^\vee_r]$, 
and $\init_w(I_X)$ is the ideal of all initial forms $\init_w(f)$ for
$f\in I_X$. The set $\cT X$ is a rational polyhedral fan of dimension
$\dim X$~\cite{BG}. A point $w\in \cT X$ is called \emph{regular} if
there exists a vector subspace $\mathbb{L}_w\subset \RR\otimes
\Lambda_r$ such that $\cT X$ and $\mathbb{L}_w$ agree locally near
$w$. The tropical variety $\cT X$ can be endowed with a locally
constant function called \emph{multiplicity}, defined on regular
points, and that satisfies a \emph{balancing
  condition}~\cite[Definition 3.3]{ElimTheory}. There are many ways of
defining these numbers. For example, $m_w$ can be computed as the sum
of the multiplicities of all minimal associated primes of the initial
ideal $\init_w(I_X)$~\cite[$\S 2$]{TropDiscr}. Similarly, if $\Sigma$
is a cone in $\cT X$ that contains $w$ we can
define $m_w$ as the length of the 0-dimensional scheme
$V({\Sigma})\cap Z$, where $Z$ is the closure of $X$ in the toric
variety associated to the fan $\cT X$~\cite[Lemma 3.2]{ElimTheory}. 
Theorem~\ref{SIthm:MainTheoremMultiplicitiesSNC} gives an alternative
combinatorial approach for obtaining these invariants.

The theory of geometric tropicalization aims to compute tropical
varieties from geometric information on the underlying classical
varieties.  Our main players are the notions of cnc and snc pairs, and
their associated boundary complex. Roughly speaking, starting from a
smooth closed subvariety $X\subset \TP^r$ we find a cnc pair
$(\overline{X},\partial \overline{X})$ and we construct a quotient of
the boundary complex $\Delta(\partial \overline{X})$
from~\cite{BoundaryComplexes}. This simplicial complex collects the
combinatorial structure of the tropical fan $\cT X$.  Here are the
precise definitions:

\begin{definition}\label{def:cncAndSncPair}
  Let $X$ be a smooth subvariety of a torus $\TP^r$, and let
  $\overline{X}$ be a normal and $\QQ$-factorial compactification
  containing $X$ as a dense open subvariety. Let $\partial
  \overline{X} = \overline{X} \smallsetminus X$ be the boundary
  divisor of $\overline{X}$.  We say that this boundary is a
  \emph{combinatorial normal crossings divisor} if for every integer
  $l$, and any choice of $l$ boundary components, their intersection
  has codimension $l$. Similarly, the boundary is a \emph{simple
    normal crossings divisor} if, in addition, this intersection is
  transverse (i.e.\ the intersection behaves locally like a hyperplane
  arrangement).  We say that $(\overline{X},\partial\overline{X})$ is
  a \emph{combinatorial normal crossing pair} or \emph{cnc pair} for
  short, if the boundary is a combinatorial normal crossing
  divisor. \emph{Simple normal crossing pairs} (\emph{snc pairs} for
  short) are defined analogously.
\end{definition}
Note that the normality condition on $\overline{X}$ is imposed so that we can
define the order of vanishing of a rational function along an
irreducible divisor. The $\QQ$-factorial property says that Weil
divisors are $\QQ$-Cartier and it enables us to view Cartier divisors
as a subgroup all Weil divisors, thus, allowing us to speak of
divisors without further distinction. In addition, in this setting,
intersection numbers among boundary components are well
defined~\cite[Chapter 2]{FultonIT}. These numbers will be crucial when
discussing tropical multiplicities.  If $\overline{X}$ is smooth, then
the normality and $\QQ$-factorial conditions are automatically
achieved. In the language of~\cite{CompactificationsTori}, cnc pairs
will yield tropical compactifications of subvarieties of tori.

\begin{definition}\label{def:boundaryComplex}
  Let $(\overline{X}, \partial \overline{X})$ be a cnc pair. The
  \emph{boundary 
  complex} $\Delta(\partial \overline{X})$ is a simplicial complex
  whose  vertices $\{v_1, \ldots, v_m\}$ are in
  one-to-one correspondence with the $m$ components of the boundary
  divisor $\partial\overline{X}=\bigcup_{i=1}^mD_i$. Given a nonempty
  subset $I\subset \{1, \ldots, m\}$, the boundary complex contains
 a cell $\sigma_I$ spanned by $\{v_i: i\in I\}$ if an only if the
 intersection $D_I:=\bigcap_{i\in I}D_i$ is nonempty.
\end{definition}
We should remark that our definition of boundary complex differs from that
of~\cite{BoundaryComplexes} in two ways. First, Payne endows this
complex with a topological structure, and secondly, he picks one
simplex per component of the intersection $D_I$. Instead, we prefer
to identify these simplices with the unique cell $\sigma_I$ and forget about the
topological nature of this complex since our motivation is mainly
combinatorial.Thus, our construction can be naturally viewed as a quotient of
that in~\cite{BoundaryComplexes}.

Our next step is to realize the boundary complex
$\Delta(\partial\overline{X})$ in the cocharacter lattice of the
algebraic torus $\TP^r$, as in~\cite{HKT}. This is done by associating
a point in the lattice $\Lambda_r$ to every vertex $v_i$ of the
complex and extending linearly on higher-dimensional cells, following
the valuative definition of tropical varieties.  Given a component $D$
of $\partial\overline{X}$ we let $\val_D({\underline{\ \ }})$ be the order of
zeros-poles along $D$ of elements in $K[X]$. By construction, $\val_D$
is a valuation on $K[X]$ that restricts to $\mathbf{\ord}$ on
$K$~\cite[Section 2]{ElimTheory}. This valuation specifies an element
$[\val_D]$ of $\RR\otimes \Lambda_r$ by the formula
\begin{equation*}
  \label{eq:2}
  [\val_D](m):= \val_D(m_{|_X}) \qquad \text{ for any } m \in \Lambda_r^{\vee},
\end{equation*}
and extending linearly. If we fix a basis of characters $\{\chi_1,
\ldots, \chi_r\}$ of $\TP^r$, then $[\val_D]$ is identified with a
point in $\ZZ^r$, namely $[D] := (\val_{D}(\chi_1), \ldots,
\val_{D}(\chi_r))$ and $\val_D(\chi_i)$ is the order of vanishing of
$\chi_i$ along $D$.

For any  $\sigma_I \in \Delta(\partial\overline{X})$, let $[\sigma_I]$ be the
semigroup spanned by $\{[\val_{D_i}] : i\in I\}\subset \Lambda_r$. The
realization of $\Delta(\partial \overline{X})$ is the collection
$\{[\sigma_I]: I\}$. We choose the word
``realization'' rather than embedding because this map need not be
injective. As Theorem~\ref{TSGthm:geomTrop} shows, the cone over this
complex in $\RR\otimes \Lambda_r$ does not depend on the choice of the
cnc pair $(\overline{X}, \partial \overline{X})$.

\medskip

The following result of Hacking, Keel and Tevelev says that the
tropical fan $\cT X$ is precisely the cone over the
\emph{realization} of the boundary complex in the cocharacter lattice for a
given a snc pair:

\begin{theorem}[\textbf{Geometric tropicalization {\cite[\S
      2]{HKT}}}] \label{TSGthm:geomTrop} Let  $X$ be a closed smooth 
  subvariety of $\TP^r$. 
  Let $(\overline{X}, \partial \overline{X})$ be a snc pair and
  $\Delta(\partial \overline{X})$ its boundary complex. Then, the
  tropical set $\cT X$ is the cone over the realization of
  $\Delta(\partial \overline{X})$ in the cocharacter lattice of
  $\TP^r$, i.e.
\begin{equation}\label{eq:GeomTrop}
\cT X = \bigcup_{\sigma \in \Delta(\partial\overline{X})} \RR_{\geq
  0}[\sigma]\;\; \subset \RR\otimes \Lambda_r.
\end{equation}
\end{theorem}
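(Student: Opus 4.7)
The plan is to prove both inclusions of the identity $\cT X = \bigcup_{\sigma\in\Delta(\partial\overline{X})}\RR_{\geq 0}[\sigma]$ using the valuative (Bieri--Groves/Kapranov) characterization of the tropical variety: $\cT X$ is the Euclidean closure of the set $\{[\val_x] : x\in X(K)\}$, where $K$ is the Puiseux series field and $[\val_x]\in\RR\otimes\Lambda_r$ is the linear functional $m\mapsto \ord(m(x))$ on $\Lambda_r^\vee$. A $K$-point $x\colon \Sp K\to X\hookrightarrow\overline{X}$ extends uniquely to a morphism $\pi\colon \Sp R\to\overline{X}$ from the valuation ring $R$ of $K$, because $\overline{X}$ is proper; let $p\in\overline{X}$ denote the image of the special point of $\Sp R$.

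For the inclusion $\supseteq$, I would fix a cell $\sigma_I\in\Delta(\partial\overline{X})$ and non-negative rationals $\{a_i\}_{i\in I}$, not all zero, and realize $\sum_{i\in I}a_i[\val_{D_i}]$ as some $[\val_x]$. By the snc hypothesis, pick $p\in D_I$ lying on no other boundary component and choose étale-local coordinates $(u_1,\ldots,u_n)$ on $\overline{X}$ at $p$ with $D_i=\{u_i=0\}$ for $i\in I$. After passing to a sufficiently ramified extension of $K$ so that the $\varepsilon^{a_i}$ make sense, the formal arc defined by $u_i=\varepsilon^{a_i}$ for $i\in I$ and $u_j=u_j(p)$ otherwise defines a $K$-point of $X$. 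Any character $m\in\Lambda_r^\vee$ pulled back to this neighborhood factors as $m=u\cdot\prod_{i\in I}u_i^{\val_{D_i}(m)}$ for a local unit $u$, so taking $\ord$ along the arc produces precisely $\sum_{i\in I}a_i\,\val_{D_i}(m)$, as required.

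For the inclusion $\subseteq$, take $w\in\cT X$ and, by Bieri--Groves, write $w=[\val_x]$ for some $x\in X(K)$. If the special point $p$ lies in $X$, every character is a unit at $p$ and so $w=0$, which lies in the cone over the empty cell. Otherwise, let $I=\{i : p\in D_i\}$; since $p\in D_I\neq\emptyset$, the cell $\sigma_I$ belongs to $\Delta(\partial\overline{X})$. The snc local structure at $p$ again gives a factorization $m=u\cdot\prod_{i\in I}u_i^{\val_{D_i}(m)}$ with $u$ a local unit, and pulling back along $\pi$ and taking $\ord$ yields
\[
[\val_x](m)\;=\;\sum_{i\in I}\ord\bigl(u_i(\pi)\bigr)\,\val_{D_i}(m),
\]
so $w=\sum_{i\in I}a_i[\val_{D_i}]$ with $a_i=\ord(u_i(\pi))\geq 0$, placing $w\in\RR_{\geq 0}[\sigma_I]$.

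The main obstacle, common to both directions, is securing the local monomial factorization of characters near a point of $D_I$: it is precisely the transversality encoded by snc that guarantees the existence of coordinates in which the $D_i$ are coordinate hyperplanes and every character becomes a unit times a Laurent monomial in the $u_i$. A secondary technical point is the passage to ramified extensions of $K$ in the $\supseteq$ direction, which is harmless since $\cT X$ is a closed $\QQ$-rational fan. Once these are in place, the identity~(\ref{eq:GeomTrop}) follows, and as a byproduct the right-hand side is intrinsic to $X\subset\TP^r$, independent of the chosen snc pair.
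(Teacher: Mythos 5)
The paper does not actually prove Theorem~\ref{TSGthm:geomTrop} --- it is quoted from Hacking--Keel--Tevelev --- and your argument is essentially the standard proof behind that citation: the valuative description of $\cT X$ over the Puiseux series field, properness of $\overline{X}$ to produce the center $p$ of each point, and the snc-local factorization of a character as a unit times a monomial in local equations of the $D_i$ through $p$, used in one direction to read off $[\val_x]$ as a nonnegative combination of the $[\val_{D_i}]$ and in the other to construct arcs realizing any prescribed nonnegative weights. The argument is correct; the one step to tighten is the inclusion $\subseteq$, where not every $w\in\cT X$ is literally of the form $[\val_x]$ (only points with values in the value group $\QQ$ are, $\cT X$ being the closure of these), so you should run your argument on $\QQ$-rational $w$ and then conclude using that the right-hand side of~\eqref{eq:GeomTrop} is a finite union of closed cones --- the same closedness remark you already invoke for the reverse inclusion.
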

As it is pointed out in~\cite[Remark 2.7]{ElimTheory}, the proof
in~\cite{HKT} shows that the right-hand side of~\eqref{eq:GeomTrop}
contains $\cT X$ if $\overline{X}$ is normal, without any smoothness
or snc pair condition. But this containment can be strict if
$(\overline{X}, \partial \overline{X})$ is not a cnc pair, since it
could include cones of dimension greater than $\dim X$, violating the
Bieri-Groves' Theorem ~\cite{BG}. We will come back to this point in
Theorem~\ref{thm:CNCforHKT}.

\medskip

We now turn into the question of tropical multiplicities.  Consider a
monomial map $\alpha\colon \TP^r \to \TP^n$ associated to an $n\times
r$ integer matrix $A$. We think of this map as a linear map between
the associated cocharacter lattices $A\colon \Lambda_r \to
\Lambda_n$. By~\cite[Theorem 3.12]{ElimTheory} we know that
tropicalization is functorial with respect to monomial maps and
subvarieties of tori, which in particular says that
$\cT(\overline{\alpha(X)})=A(\cT X)\subset \RR\otimes \Lambda_n$.

Assume that $\alpha_{|_{X}}$ has generic fibers of
finite size $\delta$. Under this condition, \cite[Theorem
3.12]{ElimTheory} gives a way of computing multiplicities on
$\cT(\overline{\alpha(X)}) $ from the multiplicities on $\cT X$, the
degree $\delta$ and the fibers of $A$, known as the \emph{push-forward
  formula for multiplicities} of Sturmfels-Tevelev. Namely,
\begin{equation}
m_w = \frac{1}{\delta} \, \sum_v m_v \; \operatorname{ index
}\,(\mathbb{L}_w \cap \Lambda_n ,A(\mathbb{L}_v \cap \Lambda_r)),\label{eq:ST}
\end{equation}
where we sum over all points $v \in \cT X$ with $A v = w$, which are
assumed to be finite and regular. Here, $\mathbb{L}_v$ and
$\mathbb{L}_w$ are the linear spans of neighborhoods of regular points
$v \in \cT X$ and $w \in A (\cT X)$, respectively.

We now state the first main result in this section: a combinatorial
formula for computing tropical multiplicities, complementing
Theorem~\ref{TSGthm:geomTrop}. In the complete intersection case, our
theorem is equivalent to~\cite[Theorem 4.6]{ElimTheory}.  The index
factor accounts for the change in the lattice structure from the
sublattice $\ZZ[\sigma]$ to its saturation $\RR 
[\sigma]\cap \Lambda_r$ in $\Lambda_r$.

\begin{theorem}\label{SIthm:MainTheoremMultiplicitiesSNC}
  Let $X\subset \TP^r$ be a smooth $s$-dimensional closed subvariety
  and let $(\overline{X}, \partial \overline{X})$ be a snc pair.
  Then, the multiplicity of a regular point $w$ in the tropical
  variety $\cT X$ equals
  \begin{equation}
    m_w = \sum_{\sigma}
    (
    D_{k_1} \cdot \ldots \cdot D_{k_s}) \;\operatorname{
      index}\big(\RR 
    [\sigma] \cap \Lambda_r ,
    \ZZ[\sigma]\big)
    ,\label{SIeq:9}
\end{equation}
where $D_{k_1} \cdot \ldots \cdot D_{k_s}$ denotes the intersection
number of these $s$ divisors and we sum over all $(s-1)$-dimensional
cells $\sigma=\{v_{k_1}, \ldots, v_{k_{s}}\}$ in $\Delta(\partial\overline{X})$ whose associated cone
$\RR_{\geq 0} [\sigma]$ has dimension $s$ and contains $w$.
\end{theorem}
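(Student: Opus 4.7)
The plan is to compute the tropical multiplicity $m_w$ by exhibiting each intersection point of the relevant boundary divisors as the center of a local monomial model for the embedding $X\hookrightarrow \TP^r$, and then assembling the local data via the Sturmfels--Tevelev push-forward formula~\eqref{eq:ST}.

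First, I would fix a regular point $w\in \cT X$. By Theorem~\ref{TSGthm:geomTrop}, $w$ lies in the realization cones $\RR_{\geq 0}[\sigma]$ associated to finitely many cells $\sigma=\{v_{k_1},\ldots,v_{k_s}\}$ of $\Delta(\partial\overline{X})$. Regularity of $w$ together with the dimension bound for $\cT X$ forces the sum to be restricted to maximal $(s-1)$-cells whose cone has full dimension $s$. For each such cell, the snc hypothesis guarantees that the components $D_{k_1},\ldots,D_{k_s}$ meet transversely along a reduced $0$-dimensional subscheme $D_\sigma\subset \overline{X}$ of length equal to the intersection number $D_{k_1}\cdot\ldots\cdot D_{k_s}$.

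Second, I would analyze the local structure of $X\hookrightarrow \TP^r$ near each point $p\in D_\sigma$. Since $(\overline{X},\partial\overline{X})$ is snc, a formal (or \'etale) neighborhood of $p$ is isomorphic to $\af^s$ with each $D_{k_i}$ cut out by the vanishing of the $i$-th coordinate. On this chart the inclusion of $X$ into $\TP^r$ factors as a monomial map $\alpha_p\colon \TP^s\to \TP^r$ whose associated integer matrix has columns $[\val_{D_{k_1}}],\ldots,[\val_{D_{k_s}}]$, and whose restriction to the local branch of $X$ is generically one-to-one. Tropically, $\alpha_p$ sends the positive orthant of $\RR\otimes\Lambda_s$ linearly onto $\RR_{\geq 0}[\sigma]$, which is $s$-dimensional precisely because $w$ lies in the interior of this cone. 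Applying~\eqref{eq:ST} to $\alpha_p$, with $\delta=1$ and a single preimage of $w$ carrying tropical multiplicity $1$ in $\cT\TP^s=\RR\otimes\Lambda_s$, yields a local contribution of exactly $\operatorname{index}(\RR[\sigma]\cap\Lambda_r,\ZZ[\sigma])$ from $p$. Summing over the $D_{k_1}\cdot\ldots\cdot D_{k_s}$ points of $D_\sigma$, and then over all maximal cells $\sigma$ whose realization cone contains $w$, produces the formula~\eqref{SIeq:9}.

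The main technical obstacle is identifying this sum of local contributions with the global multiplicity $m_w$ defined via $\operatorname{length}(V(\Sigma)\cap Z)$ in~\cite[Lemma 3.2]{ElimTheory}. The cleanest route I envision is to compare $\overline{X}$ with the tropical compactification $Z\subset X_{\cT X}$: functoriality of tropicalization under toric modifications, together with the fact that the boundary components of $\overline{X}$ map to torus-orbit closures, should allow one to track each irreducible component of $Z\cap V(\Sigma)$ to exactly one local analytic branch of $\overline{X}$ along $D_\sigma$, avoid double counting when different cells $\sigma$ share the face containing $w$, and conclude that the local contributions are in bijection with the length-one pieces of $Z\cap V(\Sigma)$.
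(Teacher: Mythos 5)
Your overall plan (attach a lattice-index contribution to each point of $D_\sigma=D_{k_1}\cap\ldots\cap D_{k_s}$ and assemble these via the push-forward formula) is close in spirit to the paper's argument, but the step that is supposed to produce the local contribution is not a legitimate use of \eqref{eq:ST}, and the step you defer as ``the main technical obstacle'' is exactly where the real work lies. Formula \eqref{eq:ST} concerns a closed subvariety of a torus and a monomial map, and the degree $\delta$ is determined by that data; it is not a parameter you may set equal to $1$. For your local model $\alpha_p\colon\TP^s\to\TP^r$ with matrix columns $[\val_{D_{k_1}}],\ldots,[\val_{D_{k_s}}]$, the degree of $\TP^s$ onto its image torus is precisely $\operatorname{index}\big(\RR[\sigma]\cap\Lambda_r,\ZZ[\sigma]\big)$, so a correct application of \eqref{eq:ST} to this map returns multiplicity $1$ for the image torus, not the index. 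Already for $s=1$ this is visible: near a boundary point the characters restrict to $x^{a_1}u_1,\ldots,x^{a_r}u_r$ with $u_i$ units, the monomial model $x\mapsto(x^{a_1},\ldots,x^{a_r})$ maps $\TP^1$ onto its image with degree $\gcd(a_1,\ldots,a_r)$, and the contribution $\gcd(a_1,\ldots,a_r)$ of that puncture to the multiplicity of the ray cannot be extracted by declaring $\delta=1$; it has to come from counting points of $X$ itself with prescribed valuation, i.e.\ from an initial degeneration or from the length of $Z\cap V(\Sigma)$ as in \cite[Lemma 3.2]{ElimTheory}. Moreover, your monomial model describes $X$ only formally near $p$ and only up to unit factors, while \eqref{eq:ST} is a statement about subvarieties of tori, so even defining the ``local contribution'' of $p$ requires an argument you have not supplied.

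The identification of the sum of these local contributions with $m_w$, together with the absence of double counting when the cones of several cells overlap at $w$, is the actual content of the theorem, and your last paragraph only sketches a hoped-for route (``should allow one to track\ldots''). The paper closes this gap by a different reduction: since the question is local and tropical multiplicities are additive \cite{AllermanRau}, it suffices to prove \eqref{SIeq:9} for one convenient snc pair, namely the closure $Z$ of $X$ in the toric variety of a smooth fan structure refining $\cT X$ (Tevelev's tropical compactification \cite{CompactificationsTori}). There every regular $w$ lies in a unique maximal cone $\Sigma=\RR_{\geq 0}[\sigma]$; by \cite[Lemma 2.2]{CompactificationsTori} the scheme $Z\cap V(\Sigma)=Z\cap\TP_\Sigma$ is the complete intersection of the $s$ boundary divisors inside the orbit, so its length, which equals $m_w$ by \cite[Lemma 3.2]{ElimTheory}, is the intersection number $D_{k_1}\cdot\ldots\cdot D_{k_s}$, and the index factor then appears through one genuine application of \eqref{eq:ST} to the torus map $\TP_\Sigma\hookrightarrow\TP^r$ relating $\cT(X\cap\TP_\Sigma)$ to $\cT X$. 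If you insist on working pointwise with an arbitrary snc pair, you still need an argument of this kind (or a direct analysis of $\init_w(I_X)$) to convert the local analytic data at the points of $D_\sigma$ into tropical multiplicities; as written, your proposal assumes exactly this at the decisive step.
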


\begin{proof}
  Since our question is local, it suffices to show that the result
  holds for a choice of a snc pair $(\overline{X}, \partial
  \overline{X})$ whose underlying boundary complex gives a rational
  polyhedral fan in $\RR\otimes \Lambda_r$, rather than just a
  collection of cones that supports $\cT X$. For example, we could
  pick $\overline{X}$ to be the toric variety associated to a smooth
  structure on $\cT X$ (a refinement of Tevelev's tropical
  compactification~\cite{CompactificationsTori}).  In this setting,
  each regular point of $\cT X$ comes from a single top-dimensional
  cell $\sigma$ of $\Delta(\partial\overline{X})$. The general
  formula~\eqref{SIeq:9} is then a direct consequence of the
  additivity of tropical multiplicities~\cite[Construction
  2.13]{AllermanRau}.

  Our strategy goes a follows. We start by fixing a smooth fan
  structure on $\cT X$ that is compatible with $\Delta(\partial
  \overline{X})$. Then, for each maximal cone $\Sigma$ in this fan, we
  consider the codimension $s$ torus $\TP_{\Sigma}$ and we relate the
  tropical variety $\cT X$ to $\cT (X\cap \TP_{\Sigma})$ via the
  inclusion of tori $\TP_{\Sigma} \hookrightarrow \TP^r$. Since the
  multiplicity of every regular point in $\Sigma$ is be the
  intersection number of the $s$ boundary divisors associated to
  $\Sigma$, formula~\eqref{SIeq:9} follows from the push-forward
  formula~\eqref{eq:ST}.

  Let us further explain the previous outline.  By standard arguments
  in geometric combinatorics, we can extend the tropical fan in
  $\RR\otimes \Lambda_r$ to a complete fan $\cF$.  We pick a regular
  point $w$ of $\cT X$ and we let $\Sigma$ be the unique maximal cone
  of $\cT X$ containing $w$. By our assumption on the pair
  $(\overline{X}, \partial \overline{X})$, this cone can be written as
  $\RR_{\geq 0}[\sigma]$ for a unique maximal cone $\sigma \in
  \Delta(\partial \overline{X})$. If $Z$ is the closure of $X$ in the
  toric variety $X_{\Sigma}$ we know that
  $Z\cap V(\Sigma)$ is a zero-dimensional scheme of
  length $m_w$.  This number equals the intersection product of the
  cycles $Z$ and $V(\Sigma)$ in $X_\cF$~\cite[Lemma
  3.2]{ElimTheory}.

  By~\cite[Lemma 2.2]{CompactificationsTori} we know that $Z$ does not
  intersect codimension $s+1$ toric strata of $X_\cF$. In particular,
  $Z\cap \TP_{\Sigma}=Z\cap V(\Sigma)$ is nonempty: it is a complete
  intersection defined by the $s$ divisors $\{D_{1}, \ldots, D_{s}\}$
  in $\TP_{\Sigma}$ associated to $\sigma=\{v_1, \ldots, v_s\}$.  The
  length of this scheme equals the intersection number of these $s$
  divisors and it agrees with the multiplicity of $w$ as a point of
  $\cT (X\cap \TP_{\Sigma})\subset \RR[\sigma]$.  Using the
  push-forward formula~\eqref{eq:ST} for the monomial map
  $\TP_{\Sigma}\hookrightarrow \TP^r$, the multiplicity of $w$ in $\cT
  X$ equals the intersection number of the $s$ divisors $D_1, \ldots,
  D_s$ times the index of the lattice $\ZZ[\sigma]$ in its saturation
  $\RR[\sigma]\cap \Lambda_r$.  This concludes our proof.
\end{proof}

The previous theorem allows us to endow the boundary complex
$\Delta(\partial \overline{X})$ with weights on its maximal
cells. More precisely, a maximal cell $\sigma_I=\{v_{i_1}, \ldots,
v_{i_s}\}$ gets weight $m_{\sigma_I}:=D_{i_1}\cdot\ldots \cdot
D_{i_s}$.  The realization of this complex inherits these weights in
the expected way, namely
\begin{equation*}m_{[\sigma_I]}:=(D_{i_1}\cdot\ldots \cdot D_{i_s}) \,\operatorname{index}(\RR[\sigma_I]\cap \Lambda_r,
\ZZ[\sigma_I]). 
\end{equation*}
Theorem~\ref{SIthm:MainTheoremMultiplicitiesSNC} says that the
multiplicity of a regular point $w$ in $\cT X$ is obtained by summing
up the weights of the cones over all maximal cells $[\sigma_I]$ that
contain $w$.

\begin{example} \label{ex:SpecialSurface} Consider the plane $X$ of
  $\TP_{\CC}^3$ defined by the equation $x+y+z+1=0$. We compactify $X$
  in $\pr^3$. Then, $(\overline{X}, \partial{\overline{X}})$ is a snc
  pair whose boundary complex is the 1-skeleton of the $3$-dimensional
  simplex (on the left of Figure~\ref{fig:specialPlane}), with
  constant weight one. Its vertices are $(1,0,0)$, $(0,1,0)$,
  $(0,0,1)$ and $(-1,-1,-1)$, so we recover the expected 
  generic tropical plane in $\RR^3$.
\end{example}
\begin{example}\label{ex:SpecialSurfaceBlowup} We now pick the special plane in $\TP_{\CC}^3$ with
  equation $x+y+z=0$. Its compactification in $\pr^3$ has four
  components (three lines), but three of them intersect at the point
  $(0:0:0:1)$. The boundary complex is shown on the left of
  Figure~\ref{fig:specialPlane}. If we blow up this point, we obtain a
  new compactification of $X$ with five components. The boundary
  complex is a graph with five vertices and constant weight one, shown
  on the right of Figure~\ref{fig:specialPlane}. It is obtained from
  the boundary complex on the left by replacing the unique 2-cell by a
  subdividing tripod tree, whose inner vertex $E$ corresponds to the
  exceptional divisor with $[E]=(1,1,1)$. 
 \begin{figure}
   \centering
   \includegraphics[scale=0.6]{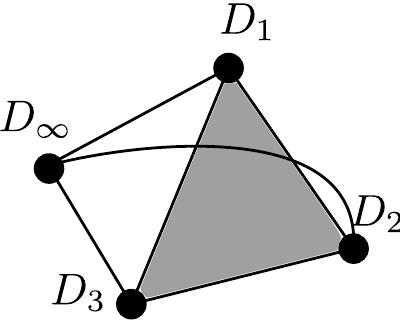}
\qquad \quad \includegraphics[scale=0.6]{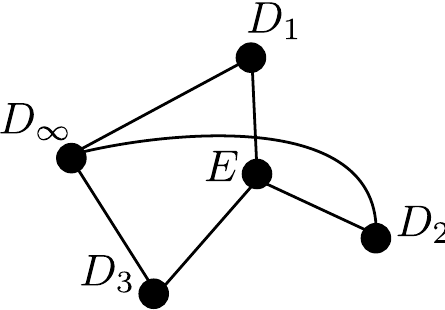}
   \caption{Boundary complexes associated to the plane $x+y+z=0$ in $\TP^3$.}
   \label{fig:specialPlane}
 \end{figure}
    \end{example}

    Note that all the results that we have stated so far are for snc
    pairs. But it would be desirable to weaken this strong condition
    on $\overline{X}$.  By the Bieri-Groves theorem~\cite[Theorem
    4.5]{BG}, the dimension of $\cT X$ equals $\dim X$. 
    By construction, a cnc pair yields a collection of cones in
    $\RR\otimes \Lambda_r$ of the expected dimension. This condition
    was violated in Example~\ref{ex:SpecialSurfaceBlowup} for the
    naive compactification in $\pr^3$. In~\cite{ElimTheory}, Sturmfels
    and Tevelev conjectured that this condition is also sufficient for
    computing supports of tropical varieties, confirming this result
    in the surface case~\cite[Proposition 5.4]{ElimTheory}. We prove
    this conjecture in any dimension, incorporating tropical
    multiplicities into the statement.

\begin{theorem}\label{thm:CNCforHKT} Let $(\overline{X}, \partial
  \overline{X})$ be a cnc pair. Then, the cone over the weighted
  realized boundary complex $\Delta(\overline{X})$ supports the
  weighted fan
  $\cT X$. 
  \end{theorem}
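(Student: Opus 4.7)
The plan is to reduce Theorem \ref{thm:CNCforHKT} to its snc counterparts, Theorems \ref{TSGthm:geomTrop} and \ref{SIthm:MainTheoremMultiplicitiesSNC}, by applying Hironaka's log resolution of singularities to $(\overline{X}, \partial\overline{X})$. This produces a proper birational morphism $\pi\colon \widetilde{X} \to \overline{X}$ that is an isomorphism over $X$ and such that $(\widetilde{X}, \partial\widetilde{X})$, with $\partial\widetilde{X} := \pi^{-1}(\partial\overline{X})$, is an snc pair. In particular, $\widetilde{X}$ is a smooth compactification of $X$ to which the two snc theorems apply and compute $\cT X$ as a weighted fan from the complex $\Delta(\partial\widetilde{X})$.

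The next step is to compare the two realized boundary complexes inside $\RR\otimes \Lambda_r$. The components of $\partial\widetilde{X}$ are the strict transforms $\widetilde{D}_i$ of the original boundary components $D_i$, together with the exceptional prime divisors of $\pi$. Since $\pi$ is an isomorphism over $X$, the valuations $\val_{\widetilde{D}_i}$ and $\val_{D_i}$ agree on $X$ and realize the same lattice point in $\Lambda_r$; moreover, for each exceptional divisor $E$ of $\pi$, the divisorial valuation $[\val_E]$ is a non-negative integer combination of the $[\val_{D_i}]$ indexed by those boundary components containing the center of the blow-up that produces $E$. Consequently, every vertex of $\Delta(\partial\widetilde{X})$ is realized inside a cone of $\Delta(\partial\overline{X})$, and the realized snc complex is a polyhedral subdivision of the realized cnc complex. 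Combined with Theorem \ref{TSGthm:geomTrop} for the snc pair, this yields the set-theoretic equality $\cT X = \bigcup_{\sigma \in \Delta(\partial\overline{X})} \RR_{\geq 0}[\sigma]$, settling the support part of the statement.

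For the weighted part, I proceed as in the proof of Theorem \ref{SIthm:MainTheoremMultiplicitiesSNC}: after possibly refining $\pi$, I may assume $\Delta(\partial\widetilde{X})$ realizes a genuine fan structure on $\cT X$ that subdivides the realized cnc complex. Each regular point $w$ in the relative interior of a cnc top cone $\RR_{\geq 0}[\sigma_I]$ with $\sigma_I = \{v_{i_1}, \ldots, v_{i_s}\}$ then lies in the interior of a unique snc top cell $\tau$, and the snc formula gives $m_w = (\widetilde{D}_{k_1} \cdots \widetilde{D}_{k_s}) \cdot \operatorname{index}(\RR[\tau] \cap \Lambda_r, \ZZ[\tau])$. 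The remaining and principal task is to show that this quantity equals the cnc weight $(D_{i_1} \cdots D_{i_s}) \cdot \operatorname{index}(\RR[\sigma_I] \cap \Lambda_r, \ZZ[\sigma_I])$. My plan is to argue by induction on the number of elementary blow-ups in $\pi$, reducing to the case of a single blow-up along a smooth center contained in a non-transverse intersection of some $\widetilde{D}_{i_l}$. In that elementary case, the pull-back decomposition $\pi^* D_i = \widetilde{D}_i + \sum_j m_{j,i} E_j$ together with the projection formula rewrites the cnc intersection number as a combination of snc ones on $\widetilde{X}$, while the refinement of the sublattice $\ZZ[\sigma_I]$ induced by the new valuations $[\val_E] = \sum_l [\val_{D_{i_l}}]$ accounts for the change in the index factor. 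The main obstacle will be precisely this local bookkeeping: verifying that the redistribution of intersection multiplicities under a blow-up cancels exactly with the refinement of the spanning sublattice, so that the snc contribution of each subdividing cell $\tau$ of $[\sigma_I]$ reassembles to the single cnc expression.
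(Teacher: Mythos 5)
Your overall strategy (resolve the cnc pair to an snc pair via Hironaka and compare the two realized weighted complexes) is the same as the paper's, but two of your intermediate claims are genuine gaps rather than bookkeeping. First, you assert that the realized snc complex is a polyhedral subdivision of the realized cnc complex and that, after ``possibly refining $\pi$,'' each regular point $w$ of a top cone $\RR_{\geq 0}[\sigma_I]$ lies in the interior of a \emph{unique} snc top cell. Neither is available: the realization of $\Delta(\partial\widetilde{X})$ is dictated by the divisorial valuations, over which the choice of resolution gives essentially no control, and in general the realized cells overlap and do not form a fan (this is visible in Figure~\ref{fig:BlowupAndGeomTrop}, where realized edges cross). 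This is precisely why formula~\eqref{SIeq:9} sums over \emph{all} maximal cells whose cones contain $w$, and why the paper introduces an auxiliary coarse fan $\cF$ on the cone over the flag complex and argues by wall-crossing across its facets. Your set-theoretic conclusion also leans on the unproven subdivision claim: the inclusion of $\cT X$ in the cone over the cnc complex is automatic for normal compactifications, but the reverse inclusion (that the whole $s$-dimensional cone $\RR_{\geq 0}[\sigma_I]$ is covered) is exactly what requires positivity of the weight, i.e.\ the multiplicity statement; in the paper Lemma~\ref{lm:MultIdentity} feeds back into Lemma~\ref{lm:SetIdentity} for this reason.

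Second, the heart of the theorem --- that the snc contributions over $[\sigma_I]$, weighted by intersection numbers times lattice indices, reassemble at every regular point to $D_{i_1}\cdot\ldots\cdot D_{i_s}$ times the cnc index --- is the part you defer to ``local bookkeeping'' in an induction on elementary blow-ups, so the key computation is not actually carried out. The paper does not induct on blow-ups: after normalizing so that $[D_i]=e_i$, it shows via the projection formula ($\pi^*(D_1)\cdot\ldots\cdot\pi^*(D_s)=D_1\cdot\ldots\cdot D_s$ and $\pi^*(D_i)\cdot W=0$ for exceptional $W$) that cones of $\cF$ having a facet spanned by $s-1$ of the $[D_i']$ carry the expected weight, and then proves by an explicit determinant expansion, again collapsed by the projection formula, that adjacent cones of $\cF$ carry equal weights. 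Without this computation, or a worked-out substitute, the weighted statement is not established; you would also still need to treat the lower-dimensional singular locus (points lying on at most $s-1$ boundary divisors, the paper's $Z_1$) and the degenerate case where the $[\val_{D_i}]$, $i\in I$, are linearly dependent, though these are minor by comparison.
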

   \begin{proof} 
     If $(\overline{X}, \partial \overline{X})$ is a snc pair, then
     the result follows by Theorems~\ref{TSGthm:geomTrop}
     and~\ref{SIthm:MainTheoremMultiplicitiesSNC}. If this condition
     is not satisfied, we must modify this cnc pair to obtain a new
     one $(\overline{X}', \partial{\overline{X}'})$ that is a snc
     pair. This modification is done by resolving the variety
     $\overline{X}$ until the snc condition is achieved, by means of
     Hironaka's strong resolution of singularities~\cite[Theorem
     3.27]{KollarResolutionBook}. Our goal is to show that the cones
     over the weighted realized boundary complexes $\Delta(\partial
     \overline{X})$ and $\Delta(\partial\overline{X}')$ agree. We
     divide the proof into two parts: the set-theoretic identity and
     the multiplicity statement. This is the content of
     Lemmas~\ref{lm:SetIdentity} and~\ref{lm:MultIdentity}.
\end{proof}

\begin{lemma}\label{lm:SetIdentity}
  The support of the cone over the realized boundary complex of a cnc
  pair is invariant under resolutions.
\end{lemma}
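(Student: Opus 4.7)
My plan is to proceed by induction on the number of blow-ups appearing in Hironaka's strong resolution of $(\overline{X},\partial\overline{X})$ into an snc pair. It therefore suffices to verify invariance of the support of the cone over the realized boundary complex under a single blow-up $\pi\colon \overline{X}'\to\overline{X}$ along a smooth center $Z$ contained in the non-snc locus of the boundary. After reindexing, I may assume $Z\subseteq D_I := \bigcap_{j\in I} D_j$ for some $I=\{i_1,\ldots,i_k\}$ and $Z\not\subseteq D_j$ for $j\notin I$. The new boundary $\partial\overline{X}'$ consists of the proper transforms $\widetilde D_1,\ldots,\widetilde D_m$ of the old components together with the exceptional divisor $E$. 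Since $\pi$ restricts to an isomorphism over $X$, we have $\val_{\widetilde D_j}=\val_{D_j}$ on $K[X]$, so the realized vertices of $\Delta(\partial\overline{X}')$ attached to the old components coincide with the original ones.

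The crucial step is to locate the new vertex $[\val_E]$ inside the cone spanned by the old cell $\sigma_I$. Using the $\QQ$-factoriality of $\overline{X}$ to make sense of the pullback identity $\pi^*D_j = \widetilde D_j + a_j E$, where $a_j\in\ZZ_{\ge 0}$ is the multiplicity along $E$ of the pullback of a local equation for $D_j$ and $a_j=0$ for $j\notin I$ (since $Z\not\subseteq D_j$), I compute, for any character $m\in\Lambda_r^\vee$,
\begin{equation*}
[\val_E](m) \;=\; \val_E(m|_X) \;=\; \sum_{j\in I} a_j\,\val_{D_j}(m|_X) \;=\; \Bigl(\sum_{j\in I} a_j\,[\val_{D_j}]\Bigr)(m).
\end{equation*}
Hence $[\val_E]=\sum_{j\in I} a_j[\val_{D_j}]$ lies in the cone $\RR_{\ge 0}[\sigma_I]$, which by hypothesis is part of the realized complex of the original cnc pair.

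With this in hand, both set-theoretic inclusions follow. For the inclusion of the old realized complex in the new one, each cell $\sigma_J\in\Delta(\partial\overline{X})$ is either preserved verbatim when $\pi$ does not touch a generic point of $D_J$, or else $J\cap I$ is nontrivial and the corresponding cone is subdivided by the insertion of the ray $\RR_{\ge 0}[\val_E]$, which by the previous paragraph lies inside $\RR_{\ge 0}[\sigma_I]$; either way no support is lost. For the reverse inclusion, a cell $\tau = \{\widetilde v_{j_1},\ldots,\widetilde v_{j_l}\}$ of $\Delta(\partial\overline{X}')$ not involving $v_E$ has the same realization as the matching cell $\sigma_{\{j_1,\ldots,j_l\}}$ of $\Delta(\partial\overline{X})$; if $\tau=\{v_E,\widetilde v_{j_1},\ldots,\widetilde v_{j_l}\}$ does involve $v_E$, then $E\cap \widetilde D_{j_1}\cap\cdots\cap\widetilde D_{j_l}\neq\emptyset$ projects via $\pi$ into $Z\cap D_{j_1}\cap\cdots\cap D_{j_l}$, which combined with $Z\subseteq D_I$ forces $D_{I\cup\{j_1,\ldots,j_l\}}\neq\emptyset$, so $\sigma_{I\cup\{j_1,\ldots,j_l\}}\in\Delta(\partial\overline{X})$ and its realized cone contains $\RR_{\ge 0}[\tau]$ thanks to the formula for $[\val_E]$.

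The main obstacle I anticipate is the careful local verification of the pullback identity and of the formula $[\val_E]=\sum_{j\in I} a_j[\val_{D_j}]$ in the case where $(\overline{X},\partial\overline{X})$ is genuinely singular or non-snc along $Z$: although $\QQ$-factoriality ensures that each $\pi^*D_j$ is a well-defined $\QQ$-Cartier divisor, one must check that the integers $a_j$ (the orders of vanishing along $E$ of local equations for $D_j$) correctly record the contribution of $D_j$ to the divisor of an arbitrary character $m\in\Lambda_r^\vee$. A secondary technical point, needed for the reverse inclusion, is the standard containment $\widetilde D_j\cap E\subseteq \pi^{-1}(D_j\cap Z)$ for blow-ups along a smooth center; this ensures that the combinatorics of which proper transforms meet $E$ is controlled by the cnc structure of the original boundary.
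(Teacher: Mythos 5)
There is a genuine gap, and it sits exactly where the real content of the lemma lies. Your easy direction (the new realized complex is contained in the old cone, via $[\val_E]=\sum_{j\in I}a_j[\val_{D_j}]$ with $a_j\ge 0$) is fine and matches the paper. But for the other containment you assert that each old cell is either ``preserved verbatim'' or ``subdivided by the insertion of the ray $\RR_{\ge 0}[\val_E]$, \ldots either way no support is lost.'' This is unjustified: the cells of $\Delta(\partial\overline{X}')$ are dictated by which divisors of $\overline{X}'$ actually intersect, and the whole purpose of the resolution is to separate the excessive intersection, so after blowing up the proper transforms $\widetilde D_{i_1},\ldots,\widetilde D_{i_s}$ of the $s$ components meeting non-transversally at a point typically have \emph{no} common point. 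Hence the top-dimensional cell $\sigma_I$ disappears from the new complex, it is not ``subdivided'' in any a priori combinatorial sense, and what survives is some collection of cones (spanned by exceptional rays and proper transforms that still happen to meet) sitting inside $\RR_{\ge 0}[\sigma_I]$. Whether their union still covers $\RR_{\ge 0}[\sigma_I]$ is precisely the statement to be proved, and nothing in your argument addresses it; already for $s=2$ (two curves tangent at a point) one must check that the chain of exceptional rays interlaces so as to sweep out the full two-dimensional cone, and in higher dimension the combinatorics of which transforms meet which exceptional divisors is not controlled by the blow-up formula alone.

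The paper's proof confronts exactly this point. It first reduces to two cases: centers contained in at most $s-1$ boundary components only produce cones of dimension at most $s-1$, which are irrelevant for the support of the $s$-dimensional fan $\cT X$; the essential case is a point where $s$ components meet non-transversally. There, instead of trying to exhibit a subdivision, it replaces $\Delta(\partial\overline{X}')$ by the flag complex $\Gamma$ on the new vertices, weighted by intersection numbers, and invokes Lemma~\ref{lm:MultIdentity}: a wall-crossing computation based on the projection formula showing that every regular point of the cone over $\Gamma$ carries weight $D_{i_1}\cdot\ldots\cdot D_{i_s}$, which is positive by the cnc hypothesis. Positivity of the weight, combined with Theorem~\ref{TSGthm:geomTrop} applied to the resolved snc pair, is what forces the whole cone $\RR_{\ge 0}[\sigma_I]$ to lie in $\cT X$, i.e.\ in the cone over the new realized complex. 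So the forward containment is genuinely a multiplicity statement, not a purely set-theoretic/combinatorial one, and your proposal is missing that idea; to repair it you would need either the paper's weight argument or an independent proof that the surviving cells of $\Delta(\partial\overline{X}')$ cover $\RR_{\ge 0}[\sigma_I]$ after an arbitrary sequence of blow-ups.
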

\begin{proof}
  Let $s$ be the dimension of $X$. The boundary $\partial\overline{X}$
  may contain singularities. Let $Z_1$ be the set of singular points
  contained in at most $s-1$ boundary divisors and $Z_2$ the set of
  boundary points where $s$ boundary components do not meet
  transversally. Since $(\overline{X}, \partial{X})$ is a cnc pair, we
  know that $Z_2$ is a finite set of points. We define $Z=Z_1\cup Z_2$
  and we show that under a resolution of $\overline{X}$ along $Z$, the
  cones over the realizations of the new boundary complex coincides
  with the cones over $\Delta(\partial \overline{X})$. Roughly
  speaking, rays associated to $[\val_E]$ for an exceptional divisor
  $E$ will not change the support of the fan associated to the
  original. It suffices to deal with $Z=Z_1$ or $Z=Z_2$
  separated. Furthermore, since the question is local, we may assume
  $Z$ is irreducible.

  Suppose $Z=Z_1$, and denote by $\pi\colon\overline{X}'\to
  \overline{X}$ the resolution of $\overline{X}$ along $Z$. By working
  over an open cover, we may assume that $Z$ corresponds to the
  intersection of $l$ divisors, namely $D_1, \ldots, D_k$. Set $I=\{1,
  \ldots, l\}$. Note that $l\leq s-1$. For each $i\in I$, let $D'_i$
  be the strict transform of $D_i$ and $E_1, \ldots, E_u$ be the
  exceptional divisors. By construction, $\pi^*(D_i)=D_i'+ m_{i1}\,
  E_1 + \ldots +m_{iu} \,E_u$ for all $i\in I$, with $m_{ij}>0$ for
  all $i,j$, and the boundary
  complex $\Delta(\partial\overline{X}')$ is obtained from
  $\Delta(\partial\overline{X})$ by relabeling the vertices $v_i$ by
  $v_i'$ ($i\in I$), adding the vertices $e_1, \ldots, e_u$ associated
  to $E_1, \ldots, E_u$ and replacing the cell $\sigma_I$ by a complex
  subdividing $\sigma_I$. This complex contains cells of dimension at
  most $l-1$ with vertices in $\{v_i': i\in I\} \cup \{ e_1, \ldots,
  e_u\}$.  Notice that the divisorial valuations satisfy
  $\val_{D_i}=\val_{D_i'}$ and $\val_{E_j}=\sum_{i\in
    I}m_{ij}\,\val_{D_i}$, $j=1, \ldots, u$. Thus, the support of the
  cone over 
  the subdivided $[\sigma_I]$ is contained in $\RR_{\geq
    0}[\sigma_I]$. This cone has dimension at most $s-1$ so it does not
  contribute to $\cT X$.


  Next, assume $Z=Z_2$ is a point in $D_I=\bigcap_{i\in I} D_i$ for
  $|I|=s$. Since the question is local, we may assume that the
  boundary $\partial\overline{X}$ consists of these $s$ divisors whose
  intersection is supported at a single point $p$ (possible with
  multiplicity).  In this situation, the boundary complex
  $\Delta(\partial\overline{X})$ is an $(s-1)$-dimensional simplex,
  with vertices $\{v_i: i \in I\}$.  Keeping the notation from the
  case $Z=Z_1$, the resolution $\pi\colon\overline{X}'\to
  \overline{X}$ at the point $p$ gives
\[
\pi^*(D_i)=D_i'+ \sum_{j=1}^um_{ij}\,E_j \qquad i\in I,
\]
where all $m_{ij}$ are positive integers and $E_1, \ldots, E_u$ are
the components of the exceptional locus.  As before, we have
$\val_{D_i'}=\val_{D_i}$, $\val_{E_j}=\sum_{i\in I} m_{ij}\,
\val_{D_j}$, for all $j=1, \ldots, u$. In particular, $[\val{E_j}]\in
\RR[\sigma_I]$.  If the valuations $\{[\val_{D_i}]: i\in I\}$ are
linearly dependent, the cones over the realizations of
$\Delta(\partial\overline{X})$ and $\Delta(\partial\overline{X}')$
have dimension at most $s-1$, and there is nothing to prove. Thus, we
may assume the valuations $\{[\val_{D_i}]: i\in I\}$ are linearly
independent.


The boundary complex $\Delta(\partial \overline{X}')$ has $s+u$ vertices
$\{v_i: i\in I\}\cup \{e_1, \ldots, e_u\}$. To simplify notation, we
replace this complex by the $s$-dimensional weighted flag complex $\Gamma$ on
these $s+N$ vertices, with weights on maximal cells given by the intersection
number of the associated divisors.  If these divisors do not meet, the
given weight is zero, and we know that this cell does not belong to
$\Delta(\partial \overline{X}')$. 
By Theorem~\ref{TSGthm:geomTrop}, we know that $[D_i']\in \cT X$ for
all $i\in I$.  The support of $\cT X$ contains the cone spanned by
these $s$ rays if and only if regular point in $\Gamma$ has
positive weight. Lemma~\ref{lm:MultIdentity} shows that this weight
equals the intersection number of the divisors $\{D_i: i\in I\}$, which
is positive by hypothesis. This concludes our proof.
\end{proof}

\begin{lemma}\label{lm:MultIdentity}
The weights of the realized boundary complex of a cnc pair are invariant under
resolutions.
\end{lemma}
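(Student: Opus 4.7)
My plan is to verify, at each regular point $w\in\cT X$, that the weight coming from the cnc boundary complex $\Delta(\partial\overline{X})$ equals the weight coming from the snc boundary complex $\Delta(\partial\overline{X}')$ of the resolution $\pi\colon \overline{X}'\to \overline{X}$; since Theorem~\ref{SIthm:MainTheoremMultiplicitiesSNC} identifies the latter with the intrinsic multiplicity $m_w$, this will identify the former with $m_w$ as well. I follow the case division used in the proof of Lemma~\ref{lm:SetIdentity}.

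In the $Z=Z_1$ case, the singularity lies in exactly $l\leq s-1$ boundary divisors $\{D_i\}_{i\in I}$, and the cells of $\Delta(\partial\overline{X})$ that are subdivided all have dimension at most $l-1\leq s-2$. Any top-dimensional cell contributing to a weight at $w$ extends such a face by $s-l$ transverse divisors $D_{k_1},\ldots,D_{k_{s-l}}$ which, by the cnc hypothesis, do not contain $Z$, hence $\pi^*(D_{k_i})=D_{k_i}'$. The projection formula
\begin{equation*}
D_{i_1}\cdots D_{i_l}\cdot D_{k_1}\cdots D_{k_{s-l}} \;=\; \pi^*(D_{i_1})\cdots\pi^*(D_{i_l})\cdot D_{k_1}'\cdots D_{k_{s-l}}',
\end{equation*}
expanded via $\pi^*(D_i)=D_i'+\sum_j m_{ij}E_j$, distributes the old intersection number among the new maximal cells. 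A parallel lattice-index computation, using $[\val_{D_i'}]=[\val_{D_i}]$ and $[\val_{E_j}]=\sum_{i\in I}m_{ij}[\val_{D_i}]$, then assembles these contributions into the identity: the old cnc weight equals the sum of snc weights of the new cells whose realized cone contains $w$.

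The essential case is $Z=Z_2$, in which $\sigma_I$ is a top-dimensional cell, the valuations $\{[\val_{D_i}]\}_{i\in I}$ are linearly independent, and the realized cone $\RR_{\geq 0}[\sigma_I]$ is subdivided by the new rays $[\val_{E_j}]=\sum_{i\in I}m_{ij}[\val_{D_i}]$ into sub-cones $\tau$ spanned by $s$-subsets of $\{v_i'\}_{i\in I}\cup\{e_1,\ldots,e_u\}$. Writing $M_\tau$ for the integer matrix expressing the vertices of $\tau$ in the basis $\{[\val_{D_i}]\}_{i\in I}$, a standard lattice computation gives
\begin{equation*}
\operatorname{index}(\Lambda,\ZZ[\tau])\;=\;|\det M_\tau|\cdot\operatorname{index}(\Lambda,\ZZ[\sigma_I])\qquad\text{with }\Lambda=\RR[\sigma_I]\cap\Lambda_r.
\end{equation*}
Equality of cnc and snc weights at $w$ then reduces to the intersection-theoretic identity
\begin{equation*}
D_1\cdots D_s \;=\; \sum_{\tau\ni w}\bigl(D_{\tau_1}'\cdots D_{\tau_s}'\bigr)\cdot|\det M_\tau|,
\end{equation*}
where the sum runs over all $s$-subsets $\tau$ of the new boundary vertices whose realized cone contains $w$ (empty intersections contributing zero).

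The main obstacle is proving this last identity. The projection formula $\pi_*(\prod_i \pi^*(D_i))=\prod_i D_i$ controls only the total of all expansion terms, which includes degenerate contributions (self-intersections of exceptional divisors, or products of divisors with collinear valuations) that do not correspond to $s$-dimensional sub-cones containing $w$. To isolate precisely the relevant sub-cones I would reduce, via Hironaka's algorithm, to a sequence of ordinary blow-ups of smooth centers and argue by induction on the number of steps; at each elementary blow-up, the excess-intersection formula for the $\mathbb{P}^{c-1}$-bundle over the blown-up center controls the new intersection numbers, while Smith normal form manages the elementary divisors of $M_\tau$, allowing me to match contributions sub-cone by sub-cone and see that the degenerate terms cancel among themselves.
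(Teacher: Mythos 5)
You correctly reduce the problem to the $Z=Z_2$ case and you correctly isolate the identity that has to be proved: for a regular $w$, the sum of $\bigl(D'_{\tau_1}\cdots D'_{\tau_s}\bigr)\,|\det M_\tau|$ over all maximal cells $\tau$ of the resolved boundary complex whose realized cone contains $w$ must equal $D_1\cdots D_s$. But that identity \emph{is} the lemma, and your proposal does not prove it: the final paragraph substitutes a plan (factor $\pi$ into ordinary blow-ups via Hironaka, induct, use the excess-intersection formula and Smith normal form, ``match contributions sub-cone by sub-cone'') for an argument, and the plan rests on an inaccurate picture. The realized cones of the new cells do not in general form a subdivision of $\RR_{\geq 0}[\sigma_I]$: the realization is not injective and the cones of distinct cells can cross one another (this already happens for $s=3$, cf.\ Figure~\ref{fig:BlowupAndGeomTrop}), so ``the sub-cones $\tau$ containing $w$'' are not the maximal cones of a fan, and an inductive cell-by-cell matching has no clear meaning. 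Moreover, at intermediate stages of the factorization the pair need not be cnc or snc, so the quantity you would propagate along the induction is not yet identified with a tropical multiplicity; and, as you yourself note, the naive expansion of $\pi^*(D_1)\cdots\pi^*(D_s)$ produces degenerate terms (repeated exceptional factors, collinear valuations) whose cancellation is precisely what is left unproved. Your treatment of $Z=Z_1$ is also off target: since points of $Z_1$ lie on at most $s-1$ boundary divisors, the center meets no $s$-fold intersection, no new top-dimensional cones are created and the weights of the untouched maximal cells are unchanged, so no redistribution of intersection numbers via the projection formula is needed there.

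For contrast, the paper closes exactly this gap with a wall-crossing argument rather than a direct expansion. It fixes a common refinement fan $\cF$ of all the realized cones inside $\RR_{\geq 0}[\sigma_I]$ and proves two things: (i) any refined cone having a facet spanned by $s-1$ of the vectors $[D_i']$ has weight $D_1\cdots D_s$, by summing the contributions of formula~\eqref{SIeq:9}, recognizing the sum as $D_1'\cdots D_{s-1}'\cdot\pi^*(D_s)$ and applying the projection formula; (ii) the weights of two refined cones sharing an interior wall agree, by expanding the lattice-index determinants along the column of the extra ray and again invoking $\pi^*(D_i)\cdot W=0$ for exceptional $W$. Connectedness of $\cF$ in codimension one then forces the constant weight $D_1\cdots D_s$ at every regular point. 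If you want to salvage your approach, you would need either to reproduce such a wall-crossing (or another mechanism that handles overlapping realized cones), or to prove your displayed identity directly with an honest bookkeeping of the degenerate terms; as written, the decisive step is asserted, not established.
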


\begin{proof}
  We keep the notation of Lemma~\ref{lm:SetIdentity}. The cones coming
  from a resolution of $Z=Z_1$ are not maximal, so they do not
  contribute any weights. Thus, we only need to analyze the case
  $Z=Z_2$.

\begin{figure}[htb]
  \centering
  \includegraphics[scale=0.35]{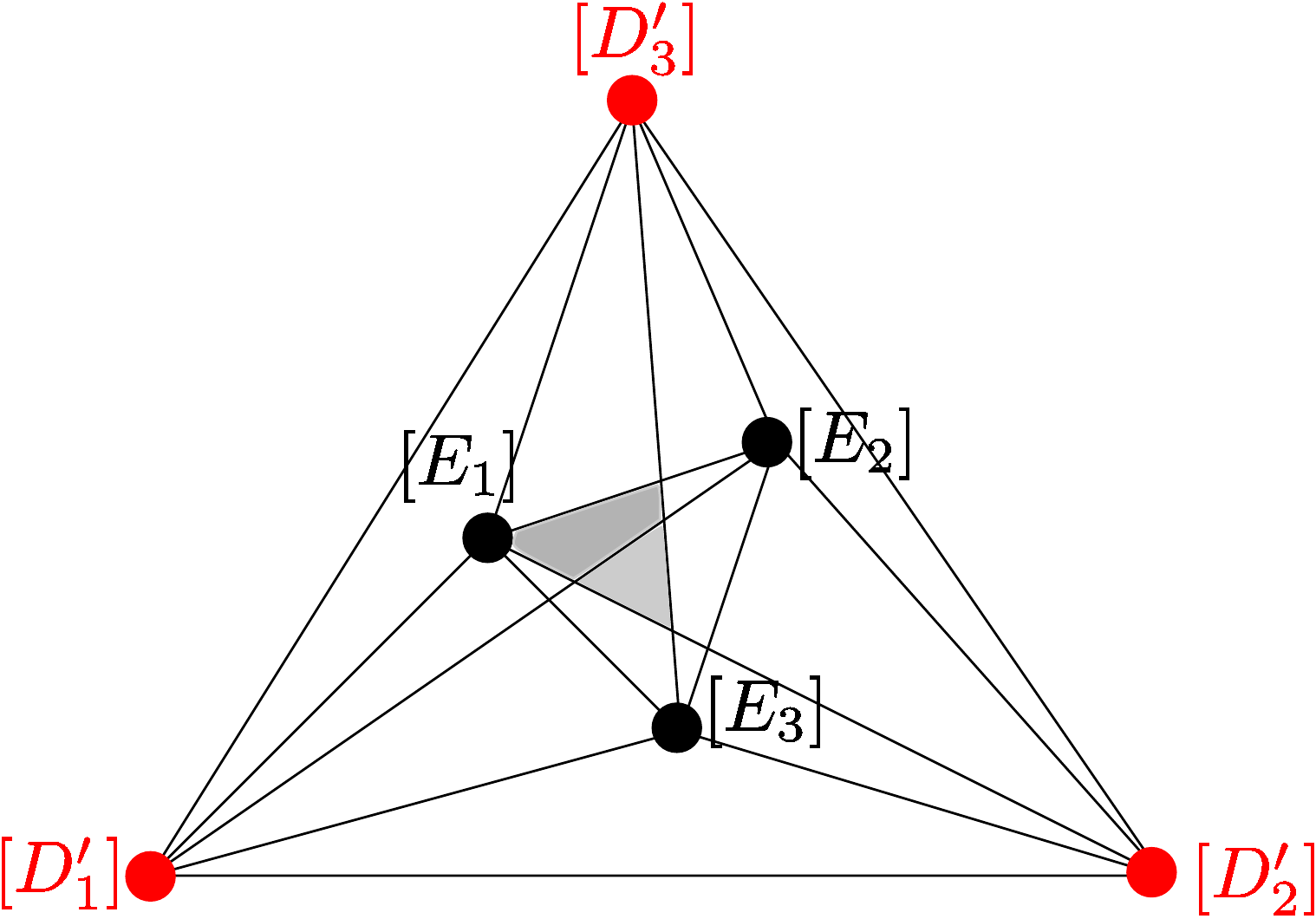}
  \caption{Divisorial valuations and flag complex $\Gamma$ arising
    from the resolution of the boundary $\bigcup_{i\in I}D_i$ at
    $p$ when $|I|=3$.  }
\label{fig:BlowupAndGeomTrop}
\end{figure}

After picking a basis of the saturation in $\Lambda_r$ of the rank $s$
sublattice generated by $[\sigma_I]$, we may assume that
$[D_i]=[D_i']=d_ie_i$ for all $i\in I$. Since the right-hand side of
formula~\eqref{SIeq:9} is multiplicative with respect to each $d_i$,
we may assume that $d_i=1$ for all $i \in I$. In this new coordinate
system, we have $[E_j]=(m_{1j}, \ldots, m_{sj})$ in $\ZZ^s$ for all
$j=1, \ldots, u$, as in Figure~\ref{fig:BlowupAndGeomTrop}.

Following Lemma~\ref{lm:SetIdentity}, we wish to show that all regular
points in the cone over the flag complex $\Gamma$ have weight $D_1\cdot
\ldots \cdot D_s$. Consider a coarse fan structure $\cF$ on the
$s$-dimensional cone over the realization of $\Gamma$. For example, in
Figure~\ref{fig:BlowupAndGeomTrop}, this corresponds to having 12
vertices on the spherical complex induced by $\Gamma$: the six big
dots, together with the six crossings of edges induced by the realization
of $\Gamma$. 
Notice that the hyperplanes supporting facets in $\cF$ are
spanned by subsets of $\{[D_i']: i\in I\} \cup \{[E_1], \ldots,
[E_u]\}$.

%
%
%

By construction, there are two types of cones to consider: the ones
with a supporting hyperplane facet spanned by $s-1$ vertices in
$\{D_i': i\in I\}$, and the ones that do not have this property. We
prove our claim by a \emph{wall-crossing type formula} in two
steps. First, we show that the first type of cones have the expected
weight. Second, we show that if a cone has the expected weight, the
same is true for all its neighbors. Since the fan $\cF$ is connected
in codimension one by construction and the facets of 
$\sigma_I$ are generated by $s-1$ vertices in $\{v_i: i\in I\}$, this
will prove our statement.

For simplicity, assume $I=\{1, \ldots, s\}$ and pick a
cone spanned by $\{[D_{1}'], \ldots, [D_{{s-1}}'], [E_j]\}$ for some
$j$. By formula~\eqref{SIeq:9}, we know that the weight of this cone is
the sum of the weight of all cells in $\Gamma$ whose cones contain the
former. In particular, 
\begin{equation}
D_1'\cdot \ldots \cdot D_s' +\sum_{k=1}^u m_{sk}\,D_1'\cdot \ldots
\cdot D_{s-1}' \cdot E_k = D_1'\cdot \ldots \cdot D_{s-1}' \cdot
(D_s'+ \sum_{k=1}^u m_{sk}\,
E_k)=D_1'\cdot \ldots \cdot D_{s-1}' \cdot
 \pi^*(D_s),\label{eq:3}
\end{equation}
since $m_{sk}$ is the index of the lattice spanned by $\{[D_1'],
\ldots, [D_{s-1}'], [E_j]\}$ in its saturation.  Since the resolution $\pi$ is a
proper morphism, using the projection formula we have $\pi^*(D_1)\cdot
\ldots \cdot \pi^*(D_s)= D_1\cdot \ldots \cdot D_s$ and
$\pi^*(D_i)\cdot W=0$ for all divisor $W$ contained in the exceptional
locus. Thus, expression~\eqref{eq:3} equals $D_1\cdot \ldots \cdot
D_s$ and so the first type of cones has the expected weight. \smallskip

Now, pick two neighboring cones $\CF_1$ and $\CF_2$ with multiplicities
$m_{\CF_1}$ and $m_{\CF_2}$ that intersect at a common facet $F$. For
example, the shaded cells in Figure~\ref{fig:BlowupAndGeomTrop}. Our
goal is to show that $m_{\CF_1}=m_{\CF_2}$. For each $I_0\subset \{1,
\ldots, s\}$ and $J_0\subset \{1, \ldots, u\}$ we call
$D_{I_0}=\{D'_i: i\in I_0\}$, $E_{J_0}=\{E_j: j\in J_0\}$ and
$[D_{I_0}], [E_{J_0}]$ the associated semigroups in $\ZZ^s$.  We consider
all pairs $(I_0, J_0)$ such that the facet $F$ lies in the
$(s-1)$-dimensional cone spanned by $[D_{I_0}] \cup [E_{J_0}]$, where
$|I_0|+|J_0|=s-1$.  By construction, every cone over a cell of
$\Gamma$ containing $\CF_1$ either also contains $\CF_2$ or it
intersects $\CF_2$ only at the face $F$. Thus, we divide all maximal
cones of $\RR_{\geq 0} \Gamma$ into four types: the ones containing
$\CF_1 \cup \CF_2$, the ones containing $\CF_1$ and not $\CF_2$, the
ones containing $\CF_2$ but not $\CF_1$, and the ones containing
neither $\CF_1$ nor $\CF_2$ (see
Figure~\ref{fig:BlowupAndGeomTrop}). Cones of types two and three are
spanned the $s-1$ rays in $[D_{I_0}] \cup [E_{J_0}]$, together with an
extra ray. Formula~\eqref{SIeq:9} yields

\begin{equation}
m_{\CF_1}-m_{\CF_2}=\sum_{I_0,J_0}\big(\sum_{\substack{\CF \text{ of
      type 2}\\ \RR_{\geq 0} \langle
[D_{I_0}], [E_{J_0}]\rangle \prec \CF}} m_{\CF} \quad- \sum_{\substack{\CF \text{ of
      type 3}\\ \RR_{\geq 0}\langle [D_{I_0}], [E_{J_0}]\rangle \prec \CF}} m_{\CF}\big),\label{eq:8}
\end{equation}
where $\CF$ is the cone over a maximal cell in $\Gamma$ and $\prec$
denotes the order in the face lattice of $\cF$. Notice that the cone
spanned by $[D_{I_0}]\cup [E_{J_0}]$ is a facet of $\CF$.  We prove
that $m_{\CF_1}-m_{\CF_2}$ is zero by showing that for each pair
$(I_0,{J_0})$ the expression between parenthesis in~\eqref{eq:8}
equals zero. Note that only cones of types two and three are
involved.

First, we compute the weights of the cones $\CF$. By definition, we
have
\[
m_{\CF}=
\begin{cases}
D'_{I_0}\cdot E_{J_0}\cdot D'_k\,   |\det([D_{I_0}]|[E_{J_0}]|[D'_k])|
& \text{ if } \CF=\RR_{\geq 0} \langle  [D_{I_0}], [E_{J_0}],
[D'_k]\rangle, k\in I,\\
D'_{I_0}\cdot E_{J_0}\cdot E_l \, |\det([D_{I_0}]|[E_{J_0}]|[E_l])|   & \text{ if }\CF= \RR_{\geq 0}
\langle  [D_{I_0}], [E_{J_0}],[E_l]\rangle, l=1, \ldots u.
\end{cases}
\]
Here $D'_{I_0}\cdot E_{J_0}\cdot D'_k$ (resp.\ $D'_{I_0}\cdot
E_{J_0}\cdot E_l$) denotes the intersection number of the divisors in
$D_{I_0}\cup E_{J_0}\cup \{D'_k\}$ (resp.\ $D_{I_0}\cup E_{J_0}\cup \{E_l\}$).

Fix a pair $(I_0,{J_0})$ such that the facet $F=\CF_1\cap \CF_2$ lies
in the span of $[D_{I_0}]\cup [E_{J_0}]$. To simplify notation, assume
$I_0$ consists of the last $|I_0|$ indices of $\{1, \ldots, s\}$. We
fix the standard orientation of $\RR^s$ and we label the set ${J_0}$
so that the ordered set $D_{I_0}\cup E_{J_0}$ satisfies that $\CF_1$
lies in the positive half-space $F^+$ determined by the linear span of
$F$, whereas $\CF_2$ lies in the negative half-space $F^-$. This
ensures that the determinant in the expression of the multiplicity
$m_{\CF}$ of a cone $\CF$ of type two is positive, whereas for a cone of
type three, this determinant is negative.

For any finite pair of ordered sets $A,B$, we let $\Sn(A,B)$ be set of
injective functions from $A$ to $B$. Each element 
of
$\Sn({A,B})$ has a sign induced by the corresponding element of the
symmetric group on $|A|$ elements.  Fix a cone $\CF$ of type two
spanned by $[D_{I_0}]\cup [E_{J_0}] \cup\{[D_k']\}$. Then, by
expanding the determinant along the column associated to
$[D_k']$, the multiplicity of $\CF$ equals
\begin{equation*}
  \begin{aligned}
    m_{\CF}&= D_{I_0}\cdot E_{J_0} \cdot D'_k \,
    (-1)^{s-|I_0|+1}\sum_{\alpha \in \Sn(J_0,I\smallsetminus (I_0\cup
      \{k\}))} (-1)^{k+s-|I_0|} (-1)^{\sign(\alpha)}(\prod_{j\in
      {J_0}}
    m_{\alpha(j)j})\\
    &= (-1)^{1+k} \sum_{\alpha \in \Sn(J_0,I\smallsetminus (I_0\cup
      \{k\}))} (-1)^{\sign(\alpha)} D_{I_0} \cdot \prod_{j\in {J_0}}
    (m_{\alpha(j)j} E_j)\cdot D_{k}' .
  \end{aligned}
\label{eq:9}
  \end{equation*}
  Likewise, by expanding determinants along the column $[E_l]$,  a cone $\CF$ of type two spanned by $[D_{I_0}]\cup
  [E_{J_0}] \cup \{[E_l]\}$ has multiplicity:
\begin{equation*}
  m_{\CF}
  =  \sum_{k\notin I_0} (-1)^{k+1}\sum_{\alpha \in \Sn(J_0, I\smallsetminus
      I_0)}  (-1)^{\sign(\alpha)} D_{I_0} \cdot \prod_{j\in {J_0}}
  (m_{\alpha(j)j}  E_j) \cdot (m_{kl}E_l). 
\label{eq:10}
  \end{equation*}
The formulas for the multiplicities of cones of type three will
deferred from the previous ones in a sign, due to the orientation convention.

Notice that the previous formulas give the value zero when applied to
cones that lie in the span of $[D_{I_0}]\cup [E_{J_0}]$.  Therefore,
if we fix $(I_0, J_0)$ and we add the contributions to~\eqref{eq:8} of
the cones spanned by $[D_{I_0}]\cup [E_{J_0}]\cup \{[D_k']\}$ and the
cones spanned by $[D_{I_0}]\cup [E_{J_0}]\cup \{[E_l]\}$ for all
$k\notin I_0$ and $l=1, \ldots u$, we obtain
\begin{equation*}
  \sum_{k\in I\smallsetminus I_0}\;\sum_{\alpha \in
    \Sn(J_0, I\smallsetminus
        (I_0\cup \{k\}))}  (-1)^{1+k+\sign(\alpha)}\,  D_{I_0}\cdot \underbrace{(D_k'+\sum_{l=1}^u m_{kl}{ E_l})}_{=\pi^*(D_k)}\cdot
    \prod_{j\in {J_0}} (m_{\alpha(j)j}E_j).\label{eq:4}
  \end{equation*}
  By the projection formula, the previous expression equals 0. This concludes our proof. 
  \end{proof}

\section{Tropical elimination and tropical implicitization}
\label{sec:tropical-elimination}

In this section we discuss tropical elimination and implicitization
theory from the perspective of geometric tropicalization. Our
exposition is based on \cite[Section 5]{ElimTheory} and
\cite{NPofImplicitEquation}. The overall spirit of \emph{tropical
  elimination} lies in computing the tropicalization of the projection
of a variety in $\TP^{r}$ to a coordinate subspace
$\TP^n$. 
%
%
Tropical implicitization is a special instance of tropical
elimination, where our (closed) input variety $X'$ is the graph of a
parameterization given by $n$ Laurent polynomials $\mathbf{f}=(f_1,
\ldots, f_n) \colon X\subset \TP^d\to \TP^n$, i.e.
\[X' := 
{\{(x, \mathbf{f}(x)) : x\in X\}} \subset \TP^{d+n},\] 
and the monomial map $\alpha$ is the projection to the last $n$
coordinates of $\TP^{d+n}$:
\begin{equation}
  \label{SIeq:14bis}
\begin{minipage}[c]{.05\linewidth}
  \xymatrix{\hspace{-5ex}
\TP^d\supseteq X\ar[r]^-{\mathbf{f}}\ar@<-0.5ex>@{^{(}->}[d]_{(id,\mathbf{f})} & 
\overline{\mathbf{f}(X)} \ar@{^{(}->}[d]^{id}  &\hspace{-5ex}\subset
  \TP^n\\
\hspace{-8ex} \TP^{d+n} \supseteq  X'\ar[r]^-{\alpha} &
\TP^n
}
\end{minipage}
  \begin{minipage}[r]{0.12\linewidth}
\hspace{2ex}\xymatrix{ \ar
[rr]^{trop}& &}
  \end{minipage}
  \begin{minipage}[c]{.3\linewidth}
  \xymatrix{
{\RR\otimes \Lambda_d\supseteq \cT X}\ar[r] 
\ar@<5ex>
@{^{(}->}[d]
& 
*+[r]{\cT\overline{\mathbf{f}(X)} }
 \ar@{^{(}->}[d]^{id}  \\
 {\RR\otimes \Lambda_{d+n} \supseteq \cT X'}\ar
 [r]^-{A} &
*+[r]{A(\cT X') \subset \RR\otimes \Lambda_n.}
}
  \end{minipage}
\end{equation}
We aim to compute the tropical
variety $\cT \,\overline{\mathbf{f}(X)}$ from the geometry of $X$ and
the polynomial map $\mathbf{f}$.  For simplicity, we assume
$\mathbf{f}$ is a generically finite map on $X$ of degree $\delta$. In
what follows, we explain how to compute $\cT\,
\overline{\mathbf{f}(X)}$ from $\cT X$ and the projection $\alpha$.

From now on, we fix $Y= \overline{\mathbf{f}(X)} \subset \TP^n$. The
variety $X' \subset \TP^{d+n}$ is a \emph{complete intersection}. If
we fix a basis of characters of $\TP^{d+n}$, this variety is defined
by the ideal $(y_1 - f_1(x), \ldots, y_n - f_n(x))$ 
in $\CC[x_1^{\pm 1}, \ldots, x_d^{\pm 1}, y_1^{\pm 1}, \ldots,
y_n^{\pm 1}]$.  It is isomorphic to $X\subset \TP^n$ via a monomial
map and it projects to $Y$ through the dominant monomial map
$\alpha$. Thus, tropical implicitization reduces to the task of
computing $\cT X'$, which we do by means of geometric tropicalization.

Since $X\subset \TP^d$ and
$X'\subset \TP^{n+d}$ are isomorphic
, we can choose to find a cnc pair for $X$ or $X'$ and build the
corresponding boundary complexes $\Delta(\partial\overline{X})$ or
$\Delta(\partial\overline{X'})$. The realization of the boundary
complex in either $\Lambda_d$ or $\Lambda_{n+d}$ will reflect our
choice. However, since $X$ is not a closed subvariety of $\TP^d$ we
would need to justify the correctness of this step. We do so in the
proof of Theorem~\ref{thm:ImplicitizationIsMonomial}, whose set
theoretic statement appeared already in~\cite[Corollary
2.9]{ElimTheory}.

As in the previous section, we build a cnc pair
$(\overline{X}, \partial \overline{X})$ and its associated weighted
boundary complex, of dimension $d-1$. The novelty with respect to the
previous section will be our choice for a realization of this weighted
complex in the cocharacter lattice $\Lambda_n$. A vertex $v_i$ of
$\Delta(\partial \overline{X})$ gets assign the cocharacter
$[\tilde{D_i}]:=\mathbf{f}^{\#}([D_i])=\val_{D_i}({\underline{\ \ }}
\circ \mathbf{f})$, mapping a character $\chi$ to the lattice point
$\val_{D_i}(\chi \circ \mathbf{f})$. If we fix a basis $\{\chi_1,
\ldots, \chi_n\}$ of characters in $\TP^n$, the resulting cocharacter
is represented by the lattice point $(\val_{D_i}(f_1), \ldots,
\val_{D_i}(f_n))$. The realization of a maximal cell $\sigma_I\in
\Delta(\partial \overline{X})$ in $\Lambda_n$ is the semigroup
$[\tilde{\sigma_I}]$ spanned by $\{[\tilde{D_i}]: i\in I\}$. Note that
the rank of $[\tilde{\sigma_I}]$ may drop. If this is not the case, we
endow the semigroup $[\tilde{\sigma_I}]$ indexed by $I=\{i_1, \ldots,
i_d\}$ with the integer weight
\begin{equation} \label{SIeq:21}
m_{[\tilde{\sigma_I}]}=\frac{1}{\delta}\,(D_{i_1}\cdot \ldots\cdot D_{i_d}) 
\,\operatorname{index}(\RR[\tilde{\sigma_I}]\cap \Lambda_n,
\ZZ[\tilde{\sigma_I}]),
\end{equation}
where $\delta$ is the degree of the map $\mathbf{f}$. If the rank
drops, we assign weight zero to the semigroup $[\tilde{\sigma_I}]$.
The realization of $\Delta(\partial\overline{X})$ in $\Lambda_n$ is
the collection of the weighted semigroups $\{[\tilde{\sigma_{I}}]:
|I|=d\}$.
%
%

\begin{theorem}\label{thm:ImplicitizationIsMonomial}
  Let $\mathbf{f}\colon \TP^d\dashrightarrow \TP^n$ be a rational
  generically finite Laurent polynomial map and let $Y$ be the Zariski
  closure of the image of $\mathbf{f}$. Denote by $X\subset \TP^d$ the
  domain of $\mathbf{f}$ and let
  $(\overline{X}, \partial\overline{X})$ be a cnc pair with associated
  boundary complex $\Delta(\partial\overline{X})$.  Then, the tropical
  variety $\cT Y$ is the weighted cone over the realization of this
  complex in $\RR\otimes \Lambda_n$.
\end{theorem}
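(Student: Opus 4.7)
The plan is to reduce the theorem to Theorem~\ref{thm:CNCforHKT} applied to the graph $X' \subset \TP^{d+n}$ followed by the Sturmfels--Tevelev push-forward formula~\eqref{eq:ST} under the coordinate projection $\alpha$ in diagram~\eqref{SIeq:14bis}. The point of passing through $X'$ is that $X$ itself is only the domain of definition of $\mathbf{f}$ and need not be closed in $\TP^d$, so geometric tropicalization cannot be applied to $X$ directly; however, $X'$ is cut out by the complete intersection $y_j - f_j(x) = 0$, hence is closed in $\TP^{d+n}$, and the graph map $\gamma = (\mathrm{id},\mathbf{f})\colon X \to X'$ is a biregular isomorphism.

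First, I would transfer the cnc pair $(\overline{X}, \partial\overline{X})$ through $\gamma$ to a cnc pair for $X'$. Since the abstract variety and its boundary are intrinsic, the simplicial complex $\Delta(\partial\overline{X})$ is unchanged; only the realization in the cocharacter lattice is affected, because one now evaluates each divisorial valuation $\val_{D_i}$ on the larger character lattice $\Lambda_{d+n}^\vee$ of $\TP^{d+n}$. Using the characters $(x_1,\ldots,x_d,y_1,\ldots,y_n)$ and the relations $y_j = f_j$ on $X'$, the vertex $v_i$ is realized as $([D_i], [\tilde D_i]) \in \Lambda_d \oplus \Lambda_n$, where $[\tilde D_i] = (\val_{D_i}(f_1),\ldots,\val_{D_i}(f_n))$ is exactly the cocharacter prescribed before the statement. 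Theorem~\ref{thm:CNCforHKT} then describes $\cT X' \subset \RR\otimes \Lambda_{d+n}$ as the weighted cone over this realization, with the weight on a maximal cell $\sigma_I$ equal to $(D_{i_1}\cdot\ldots\cdot D_{i_d})\cdot \operatorname{index}(\RR[\sigma_I']\cap\Lambda_{d+n}, \ZZ[\sigma_I'])$, where $\sigma_I' = \{([D_i],[\tilde D_i])\colon i\in I\}$.

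Second, I would apply the push-forward formula~\eqref{eq:ST} to the monomial map $\alpha$, whose matrix $A\colon \Lambda_{d+n}\to \Lambda_n$ is the projection onto the last $n$ coordinates. Set-theoretically, $A([D_i],[\tilde D_i]) = [\tilde D_i]$, so $A(\cT X')$ is precisely the realization of $\Delta(\partial\overline{X})$ in $\Lambda_n$ described above, which by the tropical elimination identity $\cT Y = A(\cT X')$ gives the support part of the statement. The weight follows by noting that $\mathbf{f}$, and therefore $\alpha|_{X'}$, has generic fiber of size $\delta$; formula~\eqref{eq:ST} therefore assigns to a regular point $w \in A(\cT X')$ the value $\tfrac{1}{\delta}\sum_v m_v\,\operatorname{index}(\mathbb{L}_w\cap\Lambda_n, A(\mathbb{L}_v\cap\Lambda_{d+n}))$. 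A single maximal cell $\sigma_I$ of full rank in $\Lambda_n$ contributes $\tfrac{1}{\delta}(D_{i_1}\cdot\ldots\cdot D_{i_d})\cdot \operatorname{index}(\RR[\tilde\sigma_I]\cap\Lambda_n,\ZZ[\tilde\sigma_I])$, which is precisely~\eqref{SIeq:21}; cells on which $A$ drops rank map to cones of dimension $< d$ and hence contribute zero to the top-dimensional weight, consistent with the convention in the statement.

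The main obstacle is the lattice-index bookkeeping in the push-forward step: one must verify that the product of the intersection number, the saturation index $\operatorname{index}(\RR[\sigma_I']\cap\Lambda_{d+n},\ZZ[\sigma_I'])$ in the ambient lattice of $X'$, and the map-index factor $\operatorname{index}(\mathbb{L}_w\cap\Lambda_n,A(\mathbb{L}_v\cap\Lambda_{d+n}))$ collapses, after division by $\delta$, into the single index $\operatorname{index}(\RR[\tilde\sigma_I]\cap\Lambda_n,\ZZ[\tilde\sigma_I])$ appearing in~\eqref{SIeq:21}. This follows from the fact that the graph of $A$ restricted to $\RR[\sigma_I']$ identifies $\ZZ[\sigma_I']$ with $\ZZ[\tilde\sigma_I]$ once the kernel direction is split off, but the accounting requires a careful application of the standard exact sequence for lattice indices under a surjective linear map, which is where the proof does its real work.
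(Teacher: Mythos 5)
Your proposal is correct and follows essentially the same route as the paper: pass to the closed graph $X'\subset\TP^{d+n}$, transfer the cnc pair through the isomorphism $X\cong X'$, realize the boundary complex in $\Lambda_{d+n}$ (so a vertex $v_i$ becomes $([D_i],[\tilde D_i])$ with $\val_{D_i}(\zeta_j)=\val_{D_i}(f_j)$), and then project by $\alpha$ using the push-forward formula~\eqref{eq:ST}, with the factor $1/\delta$ and the collapse of the two lattice indices into $\operatorname{index}(\RR[\tilde\sigma_I]\cap\Lambda_n,\ZZ[\tilde\sigma_I])$ exactly as in the paper's index identity, and with rank-dropping cells contributing nothing. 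The only cosmetic difference is that the paper spells this out on a concrete cnc pair (a tropical compactification with the divisors $\overline{F_j}$ added, so that the valuations are explicit) while remarking that any cnc pair has the required property, which is precisely how you argue.
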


\begin{proof}
  We now justify why we can compute $\cT X'\subset \TP^{d+n}$ via
  finding a cnc pair for the \emph{open} subset $X$ of $\TP^d$. We
  build $\overline{X}$ in two steps. First, we add the boundary
  divisors $F_1, \ldots, F_n$ of $\TP^d$ given by the equations $f_1,
  \ldots, f_n$. Then, we embed $\TP^d$ inside a projective toric
  variety associated to the fan $\cT X$ and we compactify $X$ inside
  this toric variety. By~\cite[Theorem 1.2]{CompactificationsTori},
  the outcome is a cnc pair
  $(\overline{X}, \partial\overline{X})$. The components of the
  boundary $\partial \overline{X}$ come in two flavors: the divisors
  $\overline{F_j}$ obtained as the closure of $F_j$ in $\overline{X}$
  and the divisors $D_1, \ldots, D_m$ in $\overline{X}\smallsetminus
  \TP^d$. Since $X'$ is isomorphic to $X$, the cnc pair
  $(\overline{X}, \partial \overline{X})$ is also associated to
  $X'$. Notice that any choice of a cnc pair as this property. We
  choose a tropical compactification since the realization of the
  boundary complex is very explicit.

  Next, we discuss out to realize the boundary complex
  $\Delta(\partial\overline{X})$ in $\Lambda_{d+n}$.  For simplicity,
  we fix a basis $\{\chi_1, \ldots \chi_d, \zeta_1, \ldots, \zeta_n\}$
  of characters of the torus $\TP^{d+n}$ by combining bases of
  characters of $\TP^d$ and $\TP^n$.  Since $\chi_i$ is a unit in
  $\TP^d$ and $\overline{F_i}\cap \TP^d \subset \TP^d$ is locally
  defined by $f_i(\mathbf{x})$, we have
  $\val_{\overline{F_j}}(\chi_i)=0$, whereas
  $\val_{\overline{F_j}}(\zeta_i) =
  \val_{\overline{F_j}}(f_i)=\delta_{ij}$.  Similarly,
  $\val_{D_{j}}(\zeta_i)=\val_{D_{j}}(f_i)$ for all $j$.  Applying the
  projection $\alpha\colon \RR^{d+n}\to \RR^n$ to the last $n$
  coordinates from~\eqref{SIeq:14bis}, we see that each maximal cell
  $\sigma$ in $\Delta(\partial\overline{X})$ satisfies
  $\alpha([\sigma])=[\tilde{\sigma}]$.  The transition from $\cT X'$
  to $\cT Y$ is obtained by applying the linear map
  $(0|\operatorname{Id}_{n})$ and noticing that
  \[
\operatorname{index}(\alpha([\sigma])^{\sat},  \alpha([\sigma]))=  \operatorname{index}(\alpha([\sigma])^{\sat},
  \alpha([\sigma]^{\sat}))\,  \operatorname{index}([\sigma]^{\sat}, [\sigma]),
\] unless the dimension of the vector space spanned by
  $\alpha([\sigma])$ is less than $d$. Such cones do not contribute to
  the multiplicity of regular points in $\cT Y$.

We end by discussing the multiplicities on $\cT Y$.
By construction, $\delta$ equals the degree of the monomial map
$\alpha$ restricted to the variety $X'$. The push-forward formula of
multiplicities implies the transition from~\eqref{SIeq:9} to
\eqref{SIeq:21} and in particular, the addition of the factor $1/\delta$
and the replacement of the lattice index factor in $\Lambda_{n+d}$ by the
corresponding lattice index factor in $\Lambda_n$.
\end{proof}

  It is in this sense that the boundary complex $\Delta(\partial
  \overline{X})$ is ``pushed-forward'' via the map $\mathbf{f}\colon
  X\to Y$ to give the boundary complex of a cnc pair associated to
  $Y$. The key fact in the proof of this result is that $\mathbf{f}$
  induces a map on function fields $\mathbf{f}^{\#}\colon
  \CC(Y)\hookrightarrow \CC(X)$. Since the field $\CC(X)$ is a finite
  extension of $\CC(Y)$ of degree $\delta$, we can always extend any
  discrete valuation on $\CC(Y)$ to a discrete valuation on $\CC(X)$
  via the map $\mathbf{f}^{\#}$. Likewise, valuations on $\CC(X)$ can
  be restricted to $\CC(Y)$.  The realization of each vertex $v_i$ in
  $\Delta(\partial\overline{X})$ by the lattice point $[\tilde{D_k}]$
  corresponds to the image of the realization of $D_k$ in
  $\Lambda_{n+d}$ under the linear map associated to the projection
  $\alpha$ from~\eqref{SIeq:14bis}. This highlights the deep
  connections between tropical implicitization and homomorphisms of
  tori.

\section{Tropical implicitization for generic surfaces}
\label{SIsec:trop-elim-gener}

In this section, we specialize the constructions of
Section~\ref{sec:tropical-elimination} to the case of generic rational
surfaces parameterized by polynomials with fixed support. Our methods
are based on~\cite{NPofImplicitEquation}. Unlike the case of
\cite[Theorem 4.1]{NPofImplicitEquation}, our construction is
independent on the smoothness on the ambient toric variety associated
to a fan structure on the tropical variety. In addition,
we give precise certificates for the genericity of these
surfaces. 

We keep the notation from Section~\ref{sec:tropical-elimination}. Our
surface $Y\subset \TP^n$ ($n\geq 3$) is parameterized by the
generically finite Laurent polynomial map $\mathbf{f}=(f_1, \ldots,
f_n)\colon \TP^2\dashrightarrow \TP^n$. Our goal is to compute the
tropical surface $\cT Y$. To simplify the exposition, we fix a basis
of the character lattice $\Lambda_n^{\vee}$, which allows us to
identify $\Lambda_n$ with
$\ZZ^n$. Following~\cite{NPofImplicitEquation}, we assume each
coordinate of $\mathbf{f}$ is generic \emph{relative to its
  support}. That is, we fix the $n$ Newton polytopes $\cP_1, \ldots,
\cP_n$ of our polynomials $f_1, \ldots, f_n$ and we let their
coefficients vary generically. These $n$ polynomials determine $n$
curves in $\TP^2$ with equations $(f_i=0)$. Our two main players in
this section are the complement of this curve arrangement, which we
call $X$, and the fan $\NF$ obtained as the common refinement of the
$n$ inner normal fans of the polytopes $\cP_1, \ldots, \cP_n$.  After
compactifying $X$ inside the toric variety $X_{\NF}$, the genericity
condition guarantees that $(\overline{X},\partial \overline{X})$ is a
cnc pair. The combinatorial nature of $X_{\NF}$ makes it suitable for
studying generic surfaces in the moduli space associated to the map
$\mathbf{f}$.

We now state the main result in this section. The remainder will be
devoted to its proof and to give several numerical examples. For
simplicity, we assume that our choices of coefficients give distinct,
\emph{irreducible}
polynomials. 
We denote the rays of $\NF$ by $\rho_1, \ldots, \rho_m$, oriented
counterclockwise, with primitive generators $n_{\rho_1}, \ldots,
n_{\rho_m}$ in $\ZZ^2$. For each such ray $\rho\in \NF^{[1]}$, we let
$[D_{\rho}]=(\min_{\alpha \in \cP_1}\{\alpha \cdot n_{\rho}\},\ldots,
\min_{\alpha \in \cP_n}\{\alpha \cdot n_{\rho}\})$. This is precisely
the evaluation of the piecewise linear tropical map
$\trop(\mathbf{f})$ at the point $\rho$.

\begin{theorem}\label{thm:TropImplGenericSurfaces}
  The tropical variety $\cT(Y)$ is
  the cone over a weighted graph, with vertices
\[
\{e_i: \dim \cP_i\neq 0, 1\leq i \leq n\}\cup
\{[D_{\rho}]:  \rho \in \NF^{[1]}, [D_{\rho}]\neq 0\},
\]
and positively weighted edges
\begin{enumerate}
\item $m_{([D_{\rho_j}],
    [D_{\rho_k}])}=\delta^{-1} 
  {| \gcd\big( 2\times2-\operatorname{minors }\, ([D_{\rho_j}] \mid
    [D_{\rho_k}])|
  } / { |\det (n_{\rho_k} \mid n_{\rho_j} )| }$, if $|j-k|=1$ mod $m$
  or $0$ otherwise.
\item $m_{(e_i, [D_{\rho}])}=  \delta^{-1}
(|\operatorname{face}_{n_{\rho}}(\cP_i) \cap
  \ZZ^2| -1)\, \gcd\big( [D_{\rho}]_j: j \neq i\big)$, if $n_{\rho}
  \in \cT(f_i)$, or $0$ otherwise.
\item $m_{(e_i, e_j)} = \delta^{-1}
  \operatorname{length}( (f_i=f_j=0)\cap \TP^2)$ if $\dim(\cP_i + \cP_j)=2$,
  and $0$ otherwise. Under further genericity, this number
  equals $1/\delta$ times the mixed volume of $\cP_i$ and $\cP_j$.
\end{enumerate}
\end{theorem}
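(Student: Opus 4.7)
The plan is to apply Theorem~\ref{thm:ImplicitizationIsMonomial} to a concrete cnc pair built from the toric surface $X_{\NF}$. Take $X=\TP^2\smallsetminus\bigcup_i F_i$, where $F_i=(f_i=0)\cap\TP^2$, and compactify $X$ inside $X_{\NF}$. Every complete fan in $\RR^2$ is simplicial, so $X_{\NF}$ is automatically projective, normal, and $\QQ$-factorial, even when not smooth. The boundary $\partial\overline{X}$ has two families of components: the toric divisors $D_\rho$ indexed by the rays $\rho\in\NF^{[1]}$, and the closures $\overline{F_i}$ of the plane curves $F_i$ for those $i$ with $\dim\cP_i\geq 1$ (when $\dim\cP_i=0$, $f_i$ is a monomial and $F_i$ is empty). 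The genericity of the coefficients of the $f_i$ with fixed Newton polytopes ensures (a) that each $\overline{F_i}$ meets each $D_\rho$ transversally at smooth torus points of the orbit $\OO_\rho$, (b) that the $\overline{F_i}$ meet each other transversally in $\TP^2$, and (c) that no three boundary components share a point. This is exactly what certifies the cnc property.

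Second, I would realize $\Delta(\partial\overline{X})$ in $\Lambda_n$ by computing the valuations of Section~\ref{sec:tropical-elimination}. For a toric ray $\rho$, the valuation $\val_{D_\rho}(f_i)$ equals the tropical evaluation $\min_{\alpha\in\cP_i}\alpha\cdot n_\rho$, so $[\tilde{D}_\rho]=[D_\rho]$ as defined. For a curve component, the irreducibility of $f_i$ together with genericity gives $\val_{\overline{F_i}}(f_j)=\delta_{ij}$, whence $[\tilde{\overline{F}_i}]=e_i$. Vertices realized at the origin contribute only the zero cone to $\cT Y$ and are discarded, leaving exactly the vertex set asserted in the statement.

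Third, I would evaluate formula~\eqref{SIeq:21} on each edge of $\Delta(\partial\overline{X})$. For two toric rays the boundary complex has an edge iff the rays span a cone of $\NF$, i.e.\ iff $|j-k|\equiv 1\pmod m$; toric intersection theory on the $\QQ$-factorial surface $X_{\NF}$ then yields $D_{\rho_j}\cdot D_{\rho_k}=|\det(n_{\rho_j}\mid n_{\rho_k})|^{-1}$, while the index of $\ZZ[D_{\rho_j}]+\ZZ[D_{\rho_k}]$ in its saturation equals the gcd of the $2\times 2$ minors of $([D_{\rho_j}]\mid[D_{\rho_k}])$. Multiplying these by $1/\delta$ gives case~(1). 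For a mixed edge $(e_i,[D_\rho])$, the curve $\overline{F_i}$ meets $D_\rho$ iff the face of $\cP_i$ selected by $n_\rho$ is positive-dimensional, which is the condition $n_\rho\in\cT(f_i)$; restricting $f_i$ to the one-dimensional orbit $\OO_\rho$ produces the initial form $\operatorname{in}_{n_\rho}(f_i)$, whose number of torus roots (counted with multiplicity) is $|\operatorname{face}_{n_\rho}(\cP_i)\cap\ZZ^2|-1$, and the index of $\ZZ e_i+\ZZ[D_\rho]$ in its saturation is $\gcd([D_\rho]_j:j\neq i)$, giving case~(2). For a curve-curve edge, the cnc condition forces $\overline{F_i}\cap\overline{F_j}$ to lie in $\TP^2$, so the intersection number equals $\operatorname{length}((f_i=f_j=0)\cap\TP^2)$; the lattice $\ZZ e_i+\ZZ e_j$ is saturated, so the index factor is trivial, and Bernstein's theorem identifies the length with $\MV(\cP_i,\cP_j)$ under the additional genericity hypothesis, giving case~(3). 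The condition $\dim(\cP_i+\cP_j)=2$ is needed so that the two curves actually cut out a zero-dimensional scheme in $\TP^2$.

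The main obstacle is the verification of the cnc property. This requires translating each of the three genericity conditions into explicit nonvanishing statements on the coefficients of the $f_i$ at their fixed supports, in particular the nondegeneracy of each initial form $\operatorname{in}_{n_\rho}(f_i)$ (so $\overline{F_i}$ avoids the torus-fixed points of $X_{\NF}$), and the absence of any common root of three distinct $f_i,f_j,f_k$ in $\TP^2$. A secondary subtlety—which is what lets us drop the smoothness hypothesis on $X_{\NF}$ required in~\cite{NPofImplicitEquation}—is tracking how the lattice multiplicities $|\det(n_{\rho_j}\mid n_{\rho_k})|$ of the 2-cones of $\NF$ appear in toric intersection numbers and cancel against the saturation indices in~\eqref{SIeq:21}, leaving an answer that depends only on $\mathbf f$ and not on the chosen refinement of $\NF$.
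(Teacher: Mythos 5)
Your proposal is correct and follows essentially the same route as the paper: compactify the curve-arrangement complement $X$ in the simplicial toric surface $X_{\NF}$, certify the cnc property via genericity of the coefficients (the paper invokes irreducibility plus Bertini), realize the boundary complex through the divisorial valuations $\val_{\overline{F_i}}(f_j)=\delta_{ij}$ and $\val_{D_\rho}(f_j)=\trop(f_j)(n_\rho)$, and extract the edge weights from toric intersection numbers together with the lattice-index and $1/\delta$ factors of Theorem~\ref{thm:ImplicitizationIsMonomial}. If anything, your treatment of the toric--toric edges is slightly more careful than the paper's displayed formula~\eqref{eq:18}: on a merely simplicial (non-smooth) $X_{\NF}$ one indeed has $D_{\rho_j}\cdot D_{\rho_k}=|\det(n_{\rho_j}\mid n_{\rho_k})|^{-1}$, which is exactly the factor appearing in item (1) of the statement.
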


It is important to point out that the previous algorithm was already
presented in~\cite{NPofImplicitEquation} and further studied
in~\cite{ElimTheory}. We contribute to the subject by elucidating the
right genericity condition to impose. The proof of~\cite[Theorem
2.1]{NPofImplicitEquation} requires the genericity of both the
coefficients and the Newton polytopes, to ensure that the Minkowski
sum of the $n$ polytopes $\cP_1, \ldots, \cP_n$ is a \emph{smooth}
polytope. Our proof discards this extra assumption on the polytopes,
unraveling the key aspects in their argumentation, and extends the
result to polynomial maps with arbitrary finite degree, as in
\cite[Theorem 5.1]{ElimTheory}.
\begin{proof}
  We follow the strategy of \cite[Theorems 2.1 and
  4.1]{NPofImplicitEquation} and make the appropriate adjustments
  along the way. Our main tool will be
  Theorem~\ref{thm:ImplicitizationIsMonomial}.  We fix the arrangement
  complement $X= \TP^2 \smallsetminus \bigcup_{i=1}^n(f_i=0)$ and
  embed it in the {normal} toric surface $X_{\NF}$.  
  The compactification of $X$ induces the pair 
  $(X_{\NF}, \partial X_{\NF})$, where
\[
\partial X_{\NF}=\{\overline{F_1}, \ldots, \overline{F_n}\}
\;\bigcup\;\{D_{1}, \ldots, D_{m}\}.
\]
Here, $D_i$ denotes the toric divisor $D_{\rho_i}$ and
$\overline{F_i}$ is the divisor associated to the curve $F_j:=(f_j=0)$
in $\TP^2$ as in the proof of
Theorem~\ref{thm:ImplicitizationIsMonomial}.

The boundary $\partial X_{\NF}$ consists of two types of irreducible
components. The first class compounds the toric divisors indexed by
the rays of $\mathscr{N}$. They correspond to facets of the Minkowski
sum $\sum_{i=1}^n \cP_i$. Since the fan $\NF$ is simplicial, the toric
boundary 
is a combinatorial normal crossings divisor.  The remaining components
are the $n$ divisors $\overline{F_1}, \ldots, \overline{F_n}$,
obtained from the curves $(f_i=0)$.  The irreducibility and genericity
of the polynomials $f_i$, together with Bertini's theorem, show that
these divisors are smooth and that $(X_{\NF},\partial X_{\NF})$ is a
cnc pair. Notice that if $f_j$ consists of a single monomial, then
$\overline{F_j}$ is the empty set. Such indices do not induce a
vertex in the boundary complex $\Delta(\partial{X_{\NF}})$, so from
now on we may assume $\dim \cP_i>0$ for all $i=1, \ldots, n$.

We now analyze the combinatorial information coming from the cnc
pair. The boundary complex
$\Delta(\partial X_{\NF})$ is a graph with $m+n$ vertices. Its edges
consist of pairs of vertices in $I \cup J$, where $I\subset \{1,
\ldots, m\}$, $J\subset \{1, \ldots, n\}$. 
The first type of edges are of the form $(D_{\rho}, D_{\rho'})$ for
$\rho$ and $\rho'$ rays in the fan $\NF$.  By standard intersection
theory on 
toric varieties, we know that the intersection
numbers among the torus-invariant divisors are given by the following
formula
  \begin{equation}
    D_{\rho}\cdot D_{\rho'} =
    \begin{cases}
      1 & \;\operatorname{ if }\,\rho\, \operatorname{ and }\, \rho'\,\operatorname{ \text{generate a
        two-dimensional cone in }}\NF,\\
      0 & \;\operatorname{else}.
    \end{cases}\label{eq:18}
  \end{equation}
  This says that we only have edges among consecutive rays of $\NF$,
  and their weight is 1.

  When $|J|=1$, we seek to identify edges of the form
  $(\overline{F_j}, D_{\rho})$, for $\rho\in \NF$ and $j=1, \ldots,
  n$. Again, this is done by toric methods. Since $\overline{F_j}$
  represents a Cartier divisor with local equation $f_j$, 
  the weight of this edge is the intersection number of the initial
  form $\init_{{\rho}}(f_j)$ and $D_{\rho}$. This quantity agrees with
  the number of nonzero solutions of the univariate polynomial
  $\init_{{\rho}}(f_j)$, namely, the lattice length of the face of
  $\cP_j$ associated to the ray $\rho$. If this face is a vertex, the
  initial form is a monomial, and so the intersection number is
  zero. Thus, we see that $\overline{F_j}$ is adjacent to a node
  $D_{\rho}$ if and only if ${\rho}$ is a ray in the normal fan of
  $\cP_j$, and if so,
  \begin{equation}
    \label{eq:19}
    \overline{F_j} \cdot D_{\rho} = \operatorname{\text{ lattice length of }}
    \operatorname{face}_{{\rho}}(\cP_j) =
    |\operatorname{face}_{{\rho}}(\cP_j) \cap \ZZ^2| -1.
  \end{equation}

  Finally, if $|J|=2$, we want to certify which edges
  $(\overline{F_i}, \overline{F_j})$ belong to the boundary complex
  $\Delta(\partial X_{\NF})$. We claim it suffices to check if the
  equations $f_i$ and $f_j$ have a common root in $\TP^2$ since any
  remaining intersection points would lie in the toric boundary, thus
  contradicting the cnc property of the chosen pair.  Therefore, the
  weight of this edge is the length of the zero-dimensional scheme
  $(f_i=f_j=0)\cap \TP^2$.  If the coefficients of these polynomials
  are generic enough, Bernstein's theorem implies that this number is the
  mixed volume of the polytopes $\cP_i$ and
  $\cP_j$. 
  The mixed volume is nonzero if and only if the Minkowski sum of the
  corresponding polytopes is two-dimensional. This explains the extra
  assumption $\dim ( \cP_i+ \cP_j) =2$ in the statement.  Notice that
  since we are interested in the weighted boundary complex,
  we can safely assume that the dimension restriction characterizes
  the edges $(\overline{F_i}, \overline{F_j})$.  Artificial edges
  added to the boundary complex have weight zero.

\smallskip

It remains to discuss the realization of the boundary complex in
$\RR^n$.  By Theorem~\ref{thm:ImplicitizationIsMonomial}, we know that
$\val_{\overline{F_j}}(f_i)=\delta_{i,j}$. 
We compute the divisorial valuation of all $D_\rho$'s with the tools
of toric geometry~\cite[Section 5.2]{FultonToricBook}. Without loss of
generality, we may assume $n_\rho=e_1$. By definition,
$\val_{D_{\rho}}(f_j)$ is the order of vanishing of the polynomial
$f_j$ at $D_\rho$, that is, by the maximal exponent of $x_1$ dividing
$f_j$ in the polynomial ring $\CC[x_2^{\pm 1}][x_1]$. Notice that this
number can be negative.  The maximum exponent is precisely
$\trop(f_j)(e_1):=\min_{\alpha \in \cP_j}\{ e_1\cdot \alpha\}
$. We infer,
  \begin{equation*}
    \label{eq:17}
    [D_i]:=(\val_{D_i}(f_j))_{j=1}^n = 
    (\trop(f_1)(n_{\rho_i}),  \ldots,   \trop(f_n)(n_{\rho_i}))
    =\trop(\mathbf{f})(n_{\rho_i}) \quad \forall\,
    i=1, \ldots, m.
  \end{equation*}
  Theorem~\ref{thm:ImplicitizationIsMonomial},
  expressions~\eqref{eq:18} and~\eqref{eq:19} yield the desired
  multiplicities.
\end{proof}
\begin{example} Our first example is a modification of~\cite[Example
  3.4]{NPofImplicitEquation}, where we remove a monomial factor from
  each
  polynomial. 
  This change has no effect on the combinatorics of the
  graph, but distorts its realization and the corresponding implicit
  equation. Our general surface $Y\subset \TP^3$ is parameterized by
\begin{equation*}
   f_1(s,t)  = a_1+a_2\,s^2t+a_3\, st^2,\qquad
    f_2(s,t)  = b_1\, st + b_2\, s+ b_3\,t,\qquad
    f_3(s,t)  = c_1\, t + c_2\, s^2 + c_3\, st^2,
  \end{equation*}
where $a_1,a_2,a_3, b_1, b_2, b_3, c_1, c_2, c_3\in \CC$ are generic
nonzero coefficients. The map has degree $\delta=1$. The non-smooth
fan $\NF$ has nine rays but they yield only eight vertices in the
realization of $\Delta(\partial \overline{X})$: $ [D_{1}]=[D_4]=
(-2,-1,-2), [D_{2}]= (-5,-3,-4), [D_{3}]= (-3,-2,-3), [D_{5}]=(-1, -1,
-1), [D_{6}]=(0, -1, -1), [D_{7}]=(0, 1, 1), [D_{8}]=(0, 1, 2),
[D_{9}]=(0, -1, -2)$.
Likewise, the realization of the edges $(D_6, D_7)$ and $(D_8, D_9)$
in $\Delta(\partial X_{\NF})$ give one-dimensional cones in $\cT
Y$. We indicate this by drawing a dashed edge in the abstract graph.
The weights of all 19 edges are computed using mixed volumes, and are
indicated in the left of Figure~\ref{fig:NPDegenerateExample}.

The resulting weighted graph in $\RR^3$ has four bivalent vertices (in gray) and
it is depicted on the right of
Figure~\ref{fig:NPDegenerateExample}. After removing these gray
vertices, we obtain a graph with $f$-vector $(7,13)$.  The complement
of the graph has eight connected components. Notice that the vertices
$e_2, [D_1]=[D_4], [D_3]$ and $[D_5]$ are aligned in the picture since
they generate a two-dimensional cone in $\RR^3$. In addition to the
four bivalent vertices, this also explains the difference between the
number of edges in the boundary complex and its realization. The predicted
edge $([D_4], [D_5])$ 
can be seen as the arc containing the vertices $[D_3],
[D_4]$ and $[D_5]$.
   \begin{figure}[htb]
     \begin{minipage}[c]{.3\linewidth}
       \includegraphics[scale=0.4]{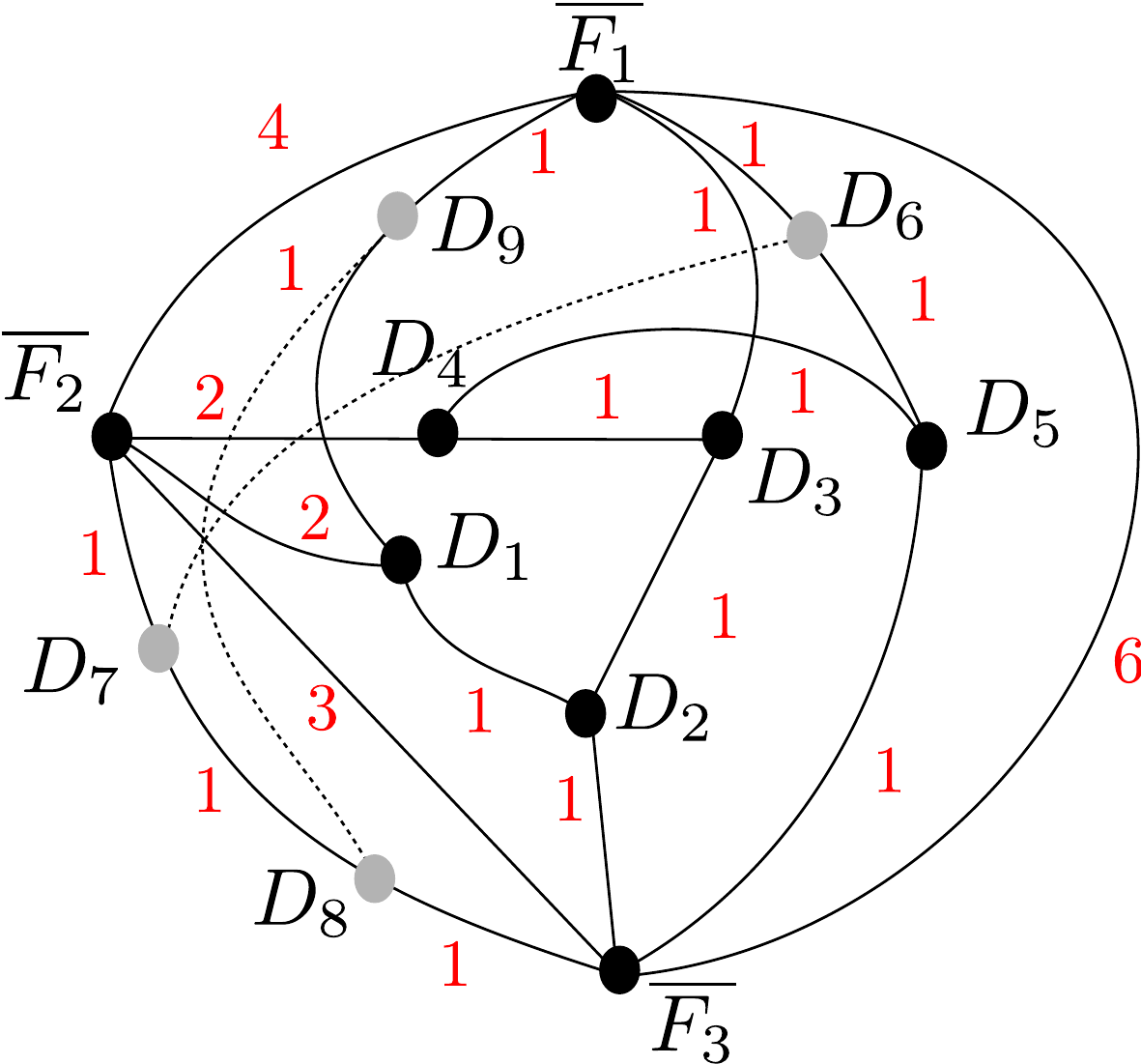}
     \end{minipage}
     \begin{minipage}[c]{.1\linewidth}
\hfill       \huge{$\rightsquigarrow$} \hfill
     \end{minipage}
     \begin{minipage}[c]{.4\linewidth}
       \includegraphics[scale=0.4]{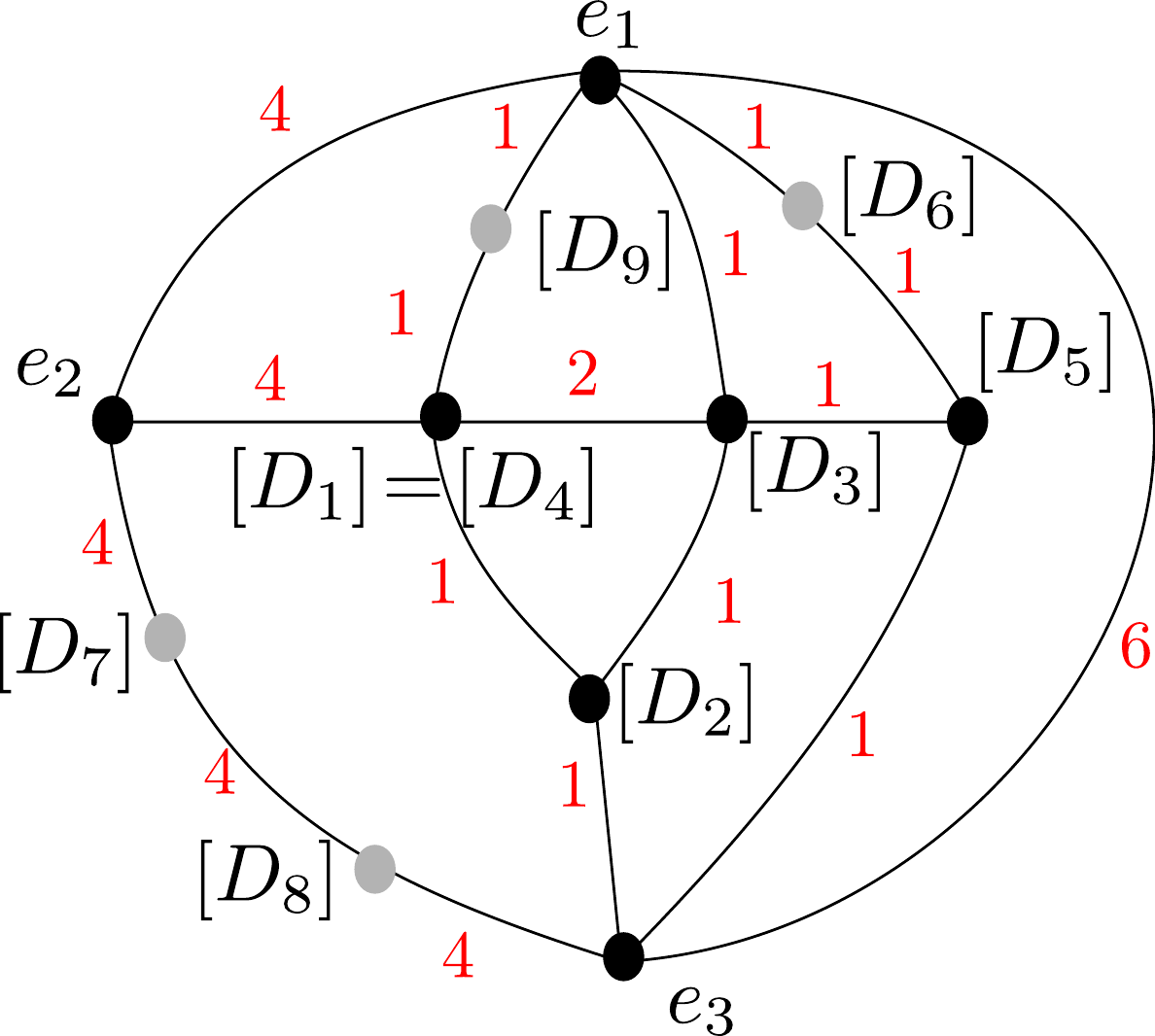}
     \end{minipage}
     \caption[Tropical graph of a generic surface in $\TP^3$.]{From
       left to right: weighted graphs representing $\cT Y$. The left
       one corresponds to the abstract graph and the right one is the
       planar graph obtained by realizing the abstract graph and
       combining weights of overlapping edges. The dashed edges on the
       left graph have weight zero and they disappear in the planar
       graph.}
     \label{fig:NPDegenerateExample}
   \end{figure}

   For generic choices of coefficients $a_1, \ldots, c_3$, the
   implicit polynomial has degree 14~\cite{singular}. Its Newton
   polytope has $f$-vector $(8,13, 7)$, which matches the
   combinatorics of our graph.
%
%
%
%
%
\end{example}

\begin{example}\label{SIex:GenericAlpha}
  We consider the morphism $f=(f_1, f_2, f_3)\colon \afc^2\dashrightarrow Y\subset\afc^3$
given by 
\begin{equation*} \label{SIeq:4}
   f_1(s,t) = a_1\,s^2+a_2\,s^3 +a_3\,t^2, \;
    f_2(s,t)  = b_1\,t^2+b_2\,t^3+b_3\,s^2,\;
    f_3(s,t)  = c_1\,st +c_2\,s^3+c_3\,t^3+c_4\,st^2+c_5\,s^2t,
\end{equation*}
with generic coefficients $a_1, \ldots, c_5\in \CC^*$.  The map has
degree one and the normal fan $\NF$ has eight rays, three of which
have non-trivial weights 2, 2 and 3.

The vertices of the graph have coordinates $e_1, e_2, e_3$, $[D_1]=
(0,0,0)$, $[D_2]= (-9,-6, -9)$, $[D_3]=(-3,-3,-3)$,
$[D_4]=(-6,-9,-9)$, $[D_5]=(0,0,0)$, $[D_6]=(2,2,3)$, $[D_6]=(2,2,3)$,
$[D_7]=(2,2,2)$ and $[D_8]=(2,2,3)$. After going through dimension
testings, we obtain a list of fourteen edges as seen in the right of
Figure~\ref{fig:GenericAlphaCurveExample}, whose weights we can compute
via mixed volumes. The transition from the weighted abstract graph to
its realization is seen in Figure~\ref{fig:GenericAlphaCurveExample}.
\end{example}
   \begin{figure}[htb]
     \begin{minipage}[c]{.35\linewidth}
    \includegraphics[scale=0.4]{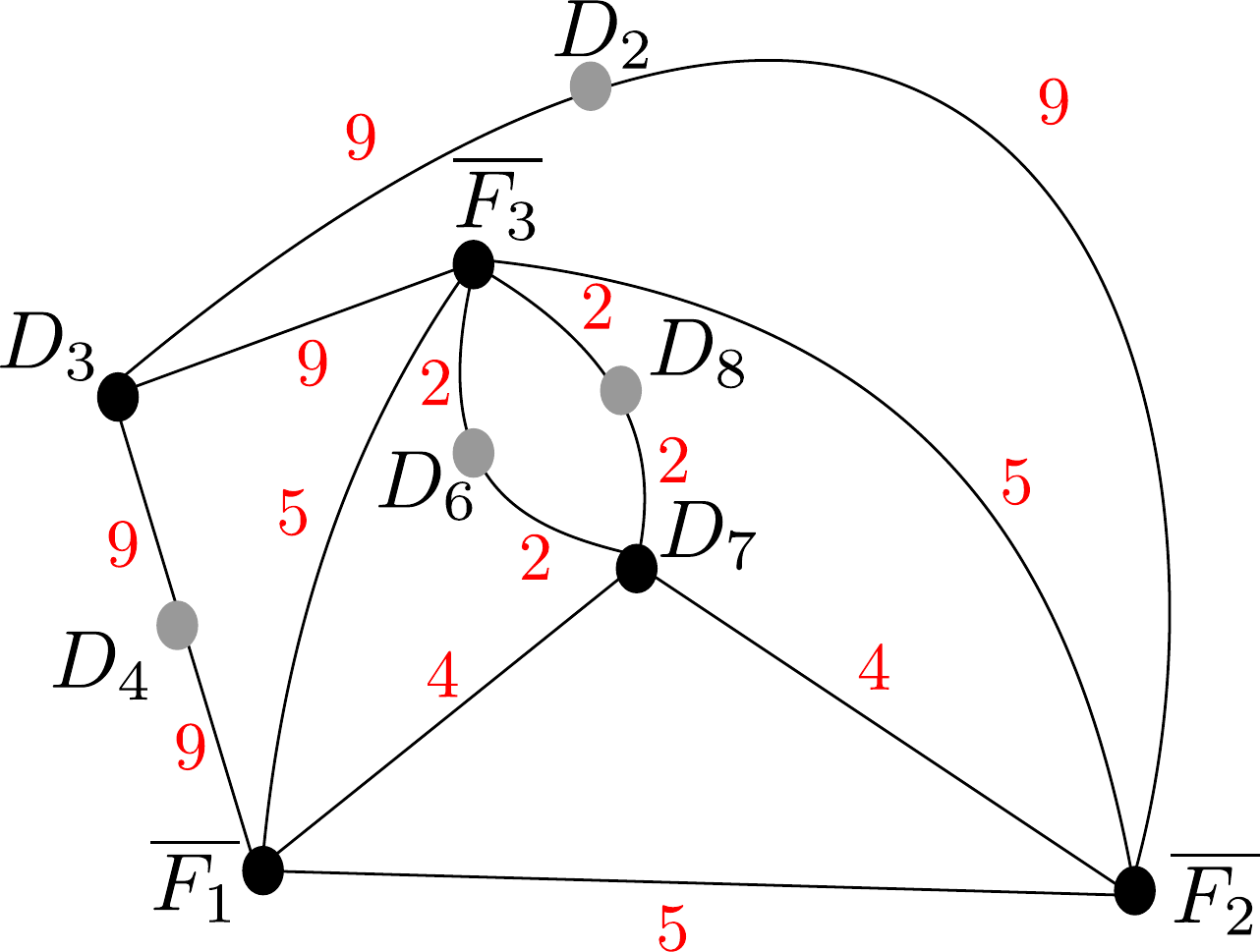}
     \end{minipage}
     \begin{minipage}[c]{.1\linewidth}
       \hfill\huge{$\rightsquigarrow$} \hfill
     \end{minipage}
     \begin{minipage}[c]{.3\linewidth}
    \includegraphics[scale=0.4]{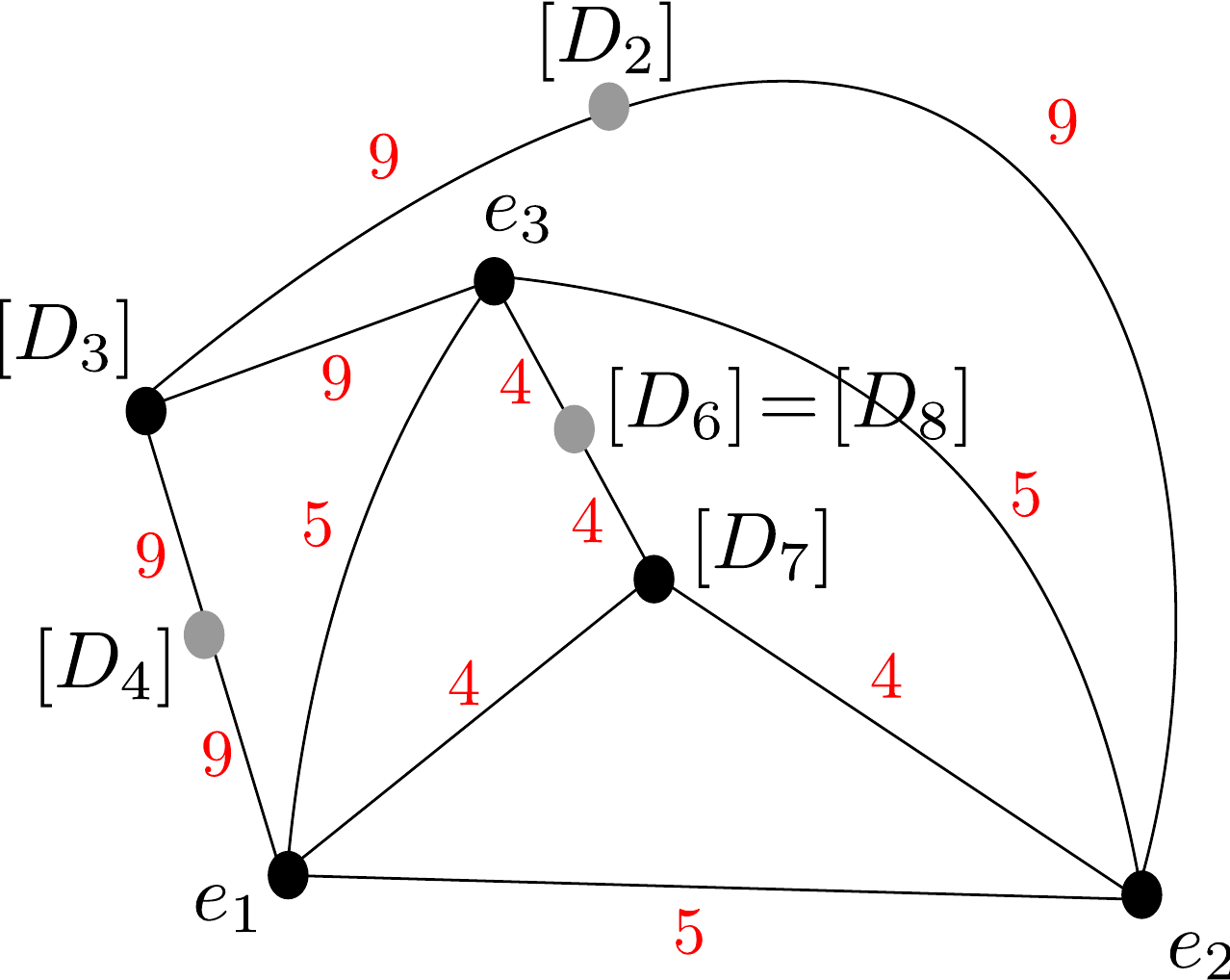}
     \end{minipage}
     \caption[Tropical graph of a generic surface associated to three
     generic nodal curves.]{Weighted graphs representing $\cT Y$.}
     \label{fig:GenericAlphaCurveExample}
   \end{figure}

\begin{example}\label{SIex:BerndsExample}
  As our third example we consider the surface in $\CC^3$
  parameterized by the degree one morphism $f=(f_1, f_2, f_3)\colon
  \afc^2\dashrightarrow Y$, where
\begin{equation} \label{SIeq:5}
     f_1(s,t)  = a_1+a_2\,s +a_3\,t,\qquad
    f_2(s,t)  = b_1+b_2\,t+b_3\,s^2,\qquad
    f_3(s,t)  = c_1 + c_2\, st.
\end{equation}
Using the methods described in this section we obtain a weighted graph
with seven vertices 
$e_1, e_2, e_3$, $[D_2]= (-1,-2,0)$, $[D_3]=(-1,-2,-2)$,
$[D_4]=(-2,-2,-3)$ and $[D_5]=(-1,-1,0)$.  After removing the bivalent
vertices $[D_2]$ and $[D_5]$, we get a graph with $f$-vector $(5,8)$,
whose complement has five connected components. The eight edges are
$(e_1,e_2)$, $(e_1, e_3)$ (both with weight 2), $(e_2, e_3)$ (with
weight 3), $(e_1, [D_3])$ (with weight 2), and $(e_2,
[D_4])$, $(e_3, [D_3])$, $(e_3,
[D_4])$ and $([D_3],[D_4])$ (all with weight 1). Its support can be
obtained from the rightmost picture in
Figure~\ref{SIfig:BerndNonGenericExample} by removing the vertex
$[E_3]$ and its three adjacent edges.

On the other hand, by standard elimination techniques, we see that the
implicit equations is a dense polynomial of degree 3 in $x,y,z$ with five
extreme monomials $1, x^3, x^2y, y^2$ and $z^2$. Its coefficients are
polynomials in the indeterminates $a_1$ through $c_2$. 
%
In Section~\ref{SIsec:trop-elim-non} we revisit this example and
explain how certain specializations of the coefficients $a_1$ through
$c_2$ removes the extremal monomial $1$ and hence gives a new facet to
the polytope. This choice of coefficients destroys the genericity
conditions on the polynomial map $\mathbf{f}$.
\end{example}


\section{Tropical implicitization for non-generic surfaces}
\label{SIsec:trop-elim-non}

In this section, we discuss methods for computing the tropicalization
of non-generic parametric surfaces.  As in
Section~\ref{SIsec:trop-elim-gener}, we start from a generically
finite Laurent polynomial map $\mathbf{f}=(f_1, \ldots, f_n)\colon
\TP^2\dashrightarrow \TP^n$.  We assume that the polynomials have
fixed support and we allow special choices of coefficients that
preserve their Newton polytopes by such that $(X_{\NF}, \partial
X_{\NF})$ is not a cnc pair.  We explain how to solve this issue and
present numerical examples that illustrate the algebro-geometry
complexity of the problem.

As we discussed in the generic case, we aim to find a cnc pair
$(\overline{X}, \partial \overline{X})$ associated to the arrangement
of plane curves $X=\TP^2 \smallsetminus\bigcup_{i=1}^n (f_i=0)$.  The
following lemma implies that we can assume all $f_i$'s are
\emph{irreducible}. A similar result allows us to assume all
polynomials are distinct.
\begin{lemma}
  Assume $\mathbf{f}$ is a finite map and that $f_1$ factors as
  $f_1=gh$ with $\deg g, \deg h<\deg f_1$. Then, the map
  $\mathbf{f'}=(g,h, f_2, \ldots, f_n)\colon X \to \TP^{n+1}$ is
  generically finite and $\mathbf{f}=\beta\circ \mathbf{f'}$, where
  $\beta\colon \TP^{n+1}\to \TP^n$ sends $(t_0, t_1, \ldots, t_n)$ to
  $(t_0t_1, t_2, \ldots, t_n)$. In addition, $\beta$ restricted to
  the image of $\mathbf{f'}$ is generically finite.
\end{lemma}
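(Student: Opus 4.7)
The plan is to verify everything essentially by chasing the factorization $\mathbf{f} = \beta \circ \mathbf{f}'$ and then extracting all three assertions from a short dimension count.

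First I would directly compute the composition to establish the factorization: for any $(s,t) \in X$,
\[
\beta(\mathbf{f}'(s,t)) = \beta\bigl(g(s,t),\, h(s,t),\, f_2(s,t), \ldots, f_n(s,t)\bigr) = \bigl(g(s,t)h(s,t),\, f_2(s,t), \ldots, f_n(s,t)\bigr) = \mathbf{f}(s,t),
\]
using $f_1 = gh$. This verifies the factorization on $X$, and in particular shows that $\mathbf{f}'$ maps $X$ into the open subset of $\TP^{n+1}$ where all coordinates are nonzero (since $f_i$, $g$, $h$ are all nonvanishing on $X$, which is the complement of the arrangement of their zero loci).

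Next I would deduce that $\mathbf{f}'$ is generically finite. Given any point $p$ in the image of $\mathbf{f}'$, the scheme-theoretic fiber $(\mathbf{f}')^{-1}(p)$ is contained in $\mathbf{f}^{-1}(\beta(p))$ by the factorization. Since $\mathbf{f}$ is assumed generically finite, the generic fiber of $\mathbf{f}$ is finite; hence so is the generic fiber of $\mathbf{f}'$, proving that $\mathbf{f}'$ is generically finite.

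For the final assertion, let $Y = \overline{\mathbf{f}(X)} \subset \TP^n$ and $Y' = \overline{\mathbf{f}'(X)} \subset \TP^{n+1}$. Since $\mathbf{f}'$ is generically finite onto $Y'$ and $\mathbf{f}$ is generically finite onto $Y$, we have $\dim Y' = \dim X = \dim Y$. By the factorization, $\beta(Y') = Y$, and therefore the dominant morphism $\beta|_{Y'} \colon Y' \to Y$ is a morphism of irreducible varieties of equal dimension. Any such dominant morphism has generic fibers of dimension zero, i.e.\ it is generically finite. The main subtlety here is simply ensuring that the image $\mathbf{f}'(X)$ has the expected dimension, which is immediate from the generic finiteness of $\mathbf{f}'$ just established; once this is in hand, the conclusion is a routine dimension count and there is no serious obstacle.
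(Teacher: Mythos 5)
Your proof is correct. The paper states this lemma without proof, so there is nothing to compare it against; your argument (direct computation of $\beta\circ\mathbf{f}'$, the fiber containment $(\mathbf{f}')^{-1}(p)\subseteq \mathbf{f}^{-1}(\beta(p))$, and the dimension count for $\beta$ restricted to $Y'=\overline{\mathbf{f}'(X)}$) is precisely the routine verification the author is implicitly relying on. Two small points of hygiene: the hypothesis is that $\mathbf{f}$ is finite, not merely generically finite, so every fiber of $\mathbf{f}$ is finite and the containment immediately gives that every fiber of $\mathbf{f}'$ is finite --- this sidesteps the question of whether the general point of $Y'$ maps under $\beta$ into the locus of $Y$ where the $\mathbf{f}$-fibers are finite, which your phrasing ``generic fiber of $\mathbf{f}$ is finite, hence so is the generic fiber of $\mathbf{f}'$'' quietly glosses over (it would need a word about dominance of $\beta|_{Y'}$ if only generic finiteness of $\mathbf{f}$ were available). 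Also, strictly speaking $\beta(Y')$ is only a dense constructible subset of $Y$ rather than all of $Y$, since $\beta$ is not proper; but dominance of $\beta|_{Y'}\colon Y'\to Y$ together with $\dim Y'=\dim X=\dim Y$ is all the fiber-dimension theorem needs, so your conclusion that $\beta|_{Y'}$ is generically finite stands.
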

As a first attempt to answer our question, we apply generic methods
from Section~\ref{SIsec:trop-elim-gener} and compactify $X$ via its
embedding in the projective toric variety $X_{\NF}$.  
The non-genericity of the coefficients of $\mathbf{f}$ says precisely that 
$(X_{\NF}, \partial X_{\NF} )$
is not a cnc pair.  Since the excessive intersection points need not
be torus invariant (and will not be in general), toric blow-ups cannot
be used to achieve the desired condition. Instead, we can resolve
toric singularities on the ambient space $X_{\NF}$ by toric blow-ups,
refining $\NF$ to a smooth fan $\NF'$ in $\RR^2$, perform classical
point blow-ups on the smooth surface $X_{\NF'}$ and finally pull back
the boundary divisors along this resolution. This procedure is tedious
to do in practice.  Our alternative strategy 
does not take advantage of the combinatorial input data, yet it is
simpler to carry out in explicit calculations.

Given $\mathbf{f}$ and $X$ as above, we consider its compactification
$\overline{X}$ in $\pr^2$. This set has $n+1$ boundary divisors:
$F_i=(f_i=0)$ and $F_{\infty}=(x_3=0)$. Let $\pi\colon \tilde{X}\to
\overline{X}$ be any resolution of $\overline{X}$ obtained by blowing
up all intersection points of three or more boundary components (if
they exist), so that $(\tilde{X}, \partial \tilde{X})$ is a cnc pair. Let
$E_{1}, \ldots, E_{s}$ be the corresponding exceptional divisors and
$F_{\infty}', F_{i}'$ be the strict transforms of the divisors
$F_{\infty}, F_{i}$, $i=1, \ldots, n$. We write
\[
\pi^*(F_{\infty})= F_{\infty}' + \sum_{j=1}^s b_j E_j,
 \qquad
\pi^*(F_{i})=F_i' + \sum_{j=1}^s b_{ij} \cdot E_j, \quad i=1, \ldots, n,
\]
for suitable $b_{ij}, b_j\in \ZZ$.  We let $\Gamma$ be the realized
weighted boundary complex $\Delta(\partial \tilde{X})$ in $\RR^n$. The
vertices of $\Gamma$ are
\[
 [F_{\infty}']=(-\deg f_1, \ldots, -\deg {f_n}), \qquad [F_i']=e_i,\quad i=1,
\ldots, n,\]
 \[[E_{j}]=
(b_{1j} - b_j\deg f_1, \ldots, b_{nj} -b_j\deg f_n),\quad j=1, \ldots,
s.
\]
The weight of an edge $(v,w)$ equals
\[
m_{(v,w)}=\frac{1}{\delta} \, i(v,w)\, \gcd(2\times 2-\operatorname{minors
}(v\mid w)),
\]
where $i(v,w)$ is the intersection number of the associated boundary
divisors. An edge $(v,w)$ belongs to $\Gamma$ if it has positive
weight. We conclude:
\begin{theorem}\label{thm:Non-GenericTropSurfaceImplicitization}
  The tropical surface associated to the image of the 
map $\mathbf{f}\colon \TP^2\dashrightarrow \TP^n$ is the cone
  over the weighted graph $\Gamma$.
\end{theorem}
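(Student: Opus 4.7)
The plan is to apply Theorem~\ref{thm:ImplicitizationIsMonomial} to the compactification $(\tilde X,\partial\tilde X)$ obtained by resolving the naive compactification of $X=\TP^2\smallsetminus\bigcup_{i=1}^n(f_i=0)$ in $\pr^2$. Three things must be checked: (i) the pair $(\tilde X,\partial\tilde X)$ is cnc, (ii) the $\Lambda_n$-realization of each vertex of $\Delta(\partial\tilde X)$ is as stated, and (iii) the weight of each edge equals the expression in the theorem.

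For (i), recall that $(\overline{X},\partial\overline X)=(\pr^2,F_\infty\cup\bigcup_i F_i)$ is normal, $\QQ$-factorial, and its boundary divisor has irreducible smooth components (since we already assumed each $f_i$ is irreducible and distinct, and we may throw away any $F_i$ coinciding with $F_\infty$). On a surface, the cnc condition reduces to requiring that no three boundary components meet in a common point, because any two distinct irreducible curves meet in codimension $2$ automatically. By construction, $\pi\colon\tilde X\to\overline X$ blows up precisely the finite set of points where three or more of the $F_i$ and $F_\infty$ meet, so iterating this finitely many times yields a smooth cnc compactification of $X$.

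For (ii), one computes divisorial valuations $\val_D(f_i)$ for each boundary component $D$ of $\tilde X$. Since $F_i$ is locally cut out by $f_i$ in $\TP^2$ and $f_j$ is a unit there for $j\neq i$, the strict transform satisfies $\val_{F_i'}(f_j)=\delta_{ij}$, giving $[F_i']=e_i$. Viewing $f_i$ as a rational function on $\pr^2$ homogenizes to $\tilde f_i/x_0^{\deg f_i}$, so $\val_{F_\infty}(f_i)=-\deg f_i$, and the same for its strict transform $F_\infty'$, yielding $[F_\infty']=(-\deg f_1,\dots,-\deg f_n)$. For an exceptional divisor $E_j$, the pullback formulas $\pi^*(F_i)=F_i'+\sum_k b_{ik}E_k$ and $\pi^*(F_\infty)=F_\infty'+\sum_k b_k E_k$ combine with $\val_{E_j}(f_i)=\val_{E_j}(\pi^*f_i)$ and the expression of $f_i$ as $F_i-(\deg f_i)F_\infty$ in the divisor group to give $\val_{E_j}(f_i)=b_{ij}-b_j\deg f_i$, matching the stated coordinates of $[E_j]$.

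For (iii), having verified that $(\tilde X,\partial\tilde X)$ is a cnc pair, Theorem~\ref{thm:ImplicitizationIsMonomial} says $\cT Y$ is the weighted cone over the realization of $\Delta(\partial\tilde X)$ in $\RR\otimes\Lambda_n$, with the weight of a one-cell $\sigma=\{v,w\}$ given by~\eqref{SIeq:21}: $\tfrac{1}{\delta}\,(D_v\cdot D_w)\,\operatorname{index}(\RR[\tilde\sigma]\cap\Lambda_n,\ZZ[\tilde\sigma])$. For a rank-two sublattice generated by two vectors $v,w\in\ZZ^n$, that lattice index is exactly $\gcd$ of the $2\times 2$ minors of the matrix $(v\mid w)$; hence the weight reduces to $\frac{1}{\delta}\,i(v,w)\,\gcd(2\times 2\text{-minors}(v\mid w))$, as claimed. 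Edges whose two vertices span a cone of dimension less than two give rank-drop, which the formula~\eqref{SIeq:21} correctly assigns weight zero, consistent with the theorem's convention. The main obstacle in this plan is the careful bookkeeping around the resolution $\pi$: one must verify that the pullback coefficients $b_{ij},b_j$ used in describing $[E_j]$ are well-defined regardless of the order in which the triple points are resolved, and that edges connecting exceptional divisors receive the correct intersection numbers via the projection formula, much as in the wall-crossing argument of Lemma~\ref{lm:MultIdentity}.
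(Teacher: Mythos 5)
Your proposal is correct and follows essentially the same route as the paper: compactify $X$ in $\pr^2$, blow up the excessive boundary intersection points to get a cnc pair $(\tilde X,\partial\tilde X)$, and then invoke Theorem~\ref{thm:ImplicitizationIsMonomial}, with the vertex coordinates coming from the divisors of the pulled-back rational functions $f_i^h/x_0^{\deg f_i}$ and the edge weights from~\eqref{SIeq:21}. You simply make explicit several steps the paper leaves implicit (the surface-case cnc check, the valuation computations for $F_i'$, $F_\infty'$, $E_j$, and the identification of the lattice index with the gcd of $2\times 2$ minors), which is fine; the worry you raise about order-of-blow-up independence is not actually needed, since the realization is defined from whichever resolution is chosen and independence of the choice is guaranteed by Theorem~\ref{thm:CNCforHKT}.
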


Before discussing the proof, it is instructive to analyze the
transition from $\Delta(\partial \overline{X})$ to $\Delta(\partial
\tilde{X})$. As we know, $\Delta(\partial \overline{X})$ contains a
maximal cell $\sigma_I$ of dimension at least two. The index set $I$
corresponds to an intersection of $|I|$ boundary divisors. Any blow-up
in this intersection produces a subdivision of $\delta_I$ (possibly
removing boundaries), ultimately leading to a graph.  At each step of
the resolution, the excessive intersection point gives rise to an
exceptional divisor and the remaining bad crossing points have lower
multiplicity. The boundary complex $\Delta(\partial \tilde{X})$ is
obtained by gluing all these resolution diagrams along common labeled
vertices and also adding edges corresponding to pairwise
intersections. The realization of this graph in $\RR^n$ is read off
from 
the proper transforms of the components of $\partial \overline{X}$.

\begin{proof}
  As explained earlier, our starting point is the naive
  compactification of $X$ in $\pr^2$. We extend the map $\mathbf{f}$
  by a homogeneous degree zero rational function
  $\mathbf{\tilde{f}}\colon \overline{X}\to Y$. Namely,
  $\tilde{f}_i=f_i^h/x_0^{\deg f_i}$ where $f_i^h$ is the
  homogenization of $f_i$ with respect to the new variables $x_0$.
 
  The boundary $\partial \overline{X}$ has $n+1$ irreducible
  components: the $n$ divisors $F_i=(f_i^h=0)\subset \pr^2$, $i=1,
  \ldots, n$ and the divisor at infinity $F_{\infty}=(x_0=0)$.  By
  construction, the pull-back along $\mathbf{\tilde{f}}$ of the basis
  of characters $\{\chi_1, \ldots, \chi_n\}$ is
  \[
  \mathbf{\tilde{f}}^*(\chi_j) = F_j + (- \deg f_i)\cdot F_{\infty}
  \qquad j=1, \ldots, n.
  \]
  Finally, we take a resolution $\pi\colon \tilde{X} \to \overline{X}$
  by blowing up the excessive boundary intersection points.  The set
  $\tilde{X}$ together with the map $g= \mathbf{\tilde{f}} \circ \pi$
  gives us the desired cnc pair $(\tilde{X}, \partial \tilde{X})$ and
  its realized boundary complex $\Delta(\partial\tilde{X})$.  The
  result now follows from Theorem~\ref{thm:ImplicitizationIsMonomial}.
%
\end{proof}

The following two numerical examples illustrate
Theorem~\ref{thm:Non-GenericTropSurfaceImplicitization}. They
correspond to special choices of coefficients in
Examples~\ref{SIex:GenericAlpha} and~\ref{SIex:BerndsExample}.  We
show how the original boundary complexes and the induced tropical
surfaces need to be modified in order to obtain the associated
non-generic objects.  To simplify notation, we let $s,t$ be our domain
parameters and $u$ be the homogenizing variable.

\begin{example}\label{ex:AlphaNonGeneric}
  We consider a particular choice of coefficients in
  Example~\ref{SIex:GenericAlpha}. In this case, our degree one map is
  given by the following three bivariate polynomials:
\begin{equation*} \label{SIeq:1}
    f_1(s,t)  = s^2-s^3-t^2,\quad
    f_2(s,t)  = t^2-t^3-s^2,\quad
    f_3(s,t)  = 4st -s^3-t^3-3st^2-3s^2t.
\end{equation*}
%
Since our polynomials $f_1, f_2, f_3$ have nonnegative exponents, we
consider $X=\afc^2\smallsetminus \bigcup_{i=1}^3(f_i=0)$ and its
compactification in $\pr^2$. In this case, all three divisors
intersect at the origin. After four blow-ups, we obtain the cnc pair
$(\tilde{X}, \partial \tilde{X})$.

Let $g =f \circ \pi\colon \tilde{X} \to Y$ be as in the proof of
Theorem~\ref{thm:Non-GenericTropSurfaceImplicitization}. Then,
$g^*(\chi_1)=F_1+2E_1+3E_2+3E_3+4 E_4 -3 F_{\infty}$,
$g^*(\chi_2)=F_2+2E_1+3E_2+3E_3+4E_4-3F_{\infty}$, $g^*(\chi_3)=
F_3+2E_1+2E_2+2E_3+2E_4-3F_{\infty}$.  Thus, $[F_i]=e_i$,
$[F_{\infty}]=(-3,-3,-3)$, $[E_1]=(2,2,2)$, $[E_2]=[E_3]=(3,3,2)$, and
$[E_4]=(4,4,2)$.  The graph of $\cT Y$ has six vertices and twelve
edges and it is illustrated in
Figure~\ref{fig:alphaCurveExample}. Notice that the boundary complex
$\Delta(\partial \tilde{X})$ has one bivalent vertex and two vertices
$E_2$ and $E_3$ that map to the same integer vector. If we contract
the divisor $E_1$ of $\tilde{X}$ that has negative self-intersection,
we obtain a cnc pair with singularities whose boundary complex is build from
$\Delta(\partial \tilde{X})$ by removing the bivalent vertex and
merging the two edges $(F_3, E_1)$ and $(E_1,E_2)$ into a unique edge
$(F_3, E_2)$. This shows that smoothness of the cnc
pair is not required for geometric tropicalization.
   \begin{figure}[ht]
      \centering
    \includegraphics[scale=0.35]{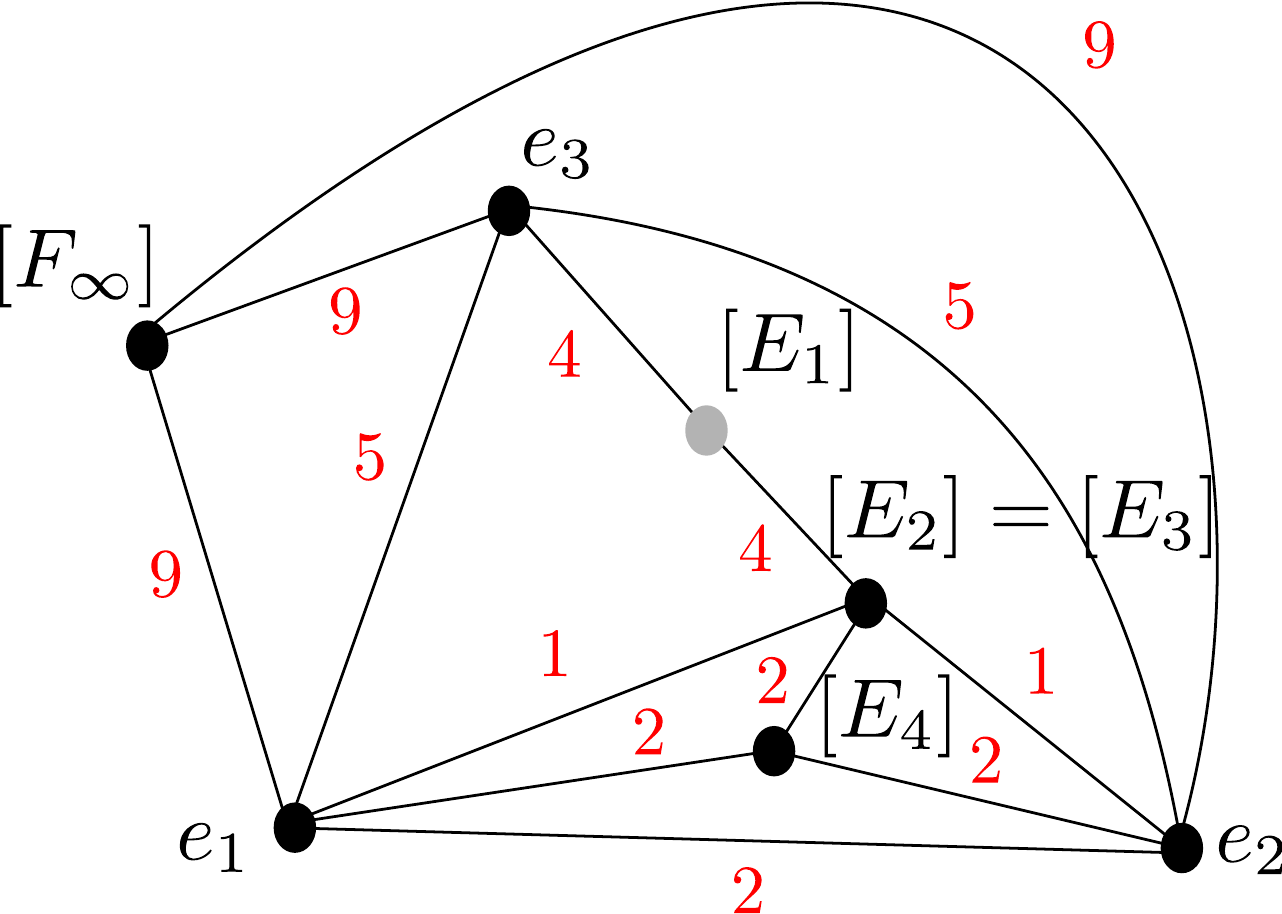}
      \caption[Tropical graph of a non-generic surface
      associated to three nodal curves.]{Weighted simplicial complex representing $\cT Y$.}
     \label{fig:alphaCurveExample}
   \end{figure}
\end{example}

\begin{example}\label{ex:BerndNonGeneric}
  We choose special parameter values for the map~\eqref{SIeq:5} in
  Example~\ref{SIex:BerndsExample}. The given non-generic surface $Y$
  in $\TP^2$ is parameterized by a degree one map:
  \begin{equation} \label{SIeq:6} 
f_1(s,t) = -1- s+t, \qquad f_2(s,t) =   -1+ t-s^2, \qquad f_3(s,t) = 2 - st.
\end{equation}
This choice of coefficients eliminates the constant term from the
implicit equation of $Y$ provided in Example~\ref{SIex:BerndsExample}
while preserving the supports of the three polynomials $f_1, f_2, f_3$.
Hence, the graph has one extra vertex, associated to the extra facet
that appears in the Newton polytope (see
Figure~\ref{SIfig:BerndNonGenericExample}).  The compactification of
$X= \CC^2\smallsetminus \bigcup_{i=1}^3 (f_i=0)$ in $\pr^2$ has two
triple intersection points: $(1:2:1)$ and
$(0:1:0)$. Figure~\ref{SIfig:BerndNonGenericExample} shows the
corresponding resolution diagrams.  The realization of the boundary
complex $\Delta(\partial \tilde{X})$ in $\RR^3$ follows from the
pullback of the basis of characters in $\TP^3$:
\begin{equation*}
\left\{\begin{aligned}\label{SIeq:7}
  (\mathbf{\tilde{f}}  \circ \pi)^*( \chi_1) &= F_1 - F_{\infty}- E_1 -2 E_2 + E_3,\\
 ( \mathbf{\tilde{f}} \circ \pi)^*( \chi_2) &= F_2 - 2F_{\infty} - E_1 -2 E_2 + E_3,\\
 ( \mathbf{\tilde{f}} \circ \pi)^*( \chi_3) &= F_3 - 2F_{\infty} - E_1 -3 E_2 + E_3.
\end{aligned}\right. 
\end{equation*}
Therefore, $[F_i]=e_i$ ($i=1, 2, 3$), $[F_{\infty}]=(-1,-2,-2)$,
$[E_1]=(-1,-1,-1)$, $[E_2]=(-2,-2,-3)$ and $[E_3]=(1,1,1)$. In
addition, the nonzero intersection multiplicities are $F_1\cdot
F_2=F_1\cdot F_3 = E_1\cdot F_3=E_2\cdot F_2 = E_2 \cdot F_{\infty}=
E_2\cdot E_3 = E_3\cdot F_i= 1$ ($i=1,2,3$) and $F_2\cdot F_3=2$. By
construction, we know that all edges have weight one, except for the
edges $(e_2,e_3)$ and $(e_1, [F_{\infty}])$, whose weight equals
two. The resulting graph and the Newton polytope of the defining
equation are shown in Figure~\ref{SIfig:BerndNonGenericExample}. 
\begin{figure}[ht]
  \centering
  \begin{minipage}{0.25\linewidth}
    \begin{minipage}[c]{.5\linewidth}
      \includegraphics[scale=0.33]{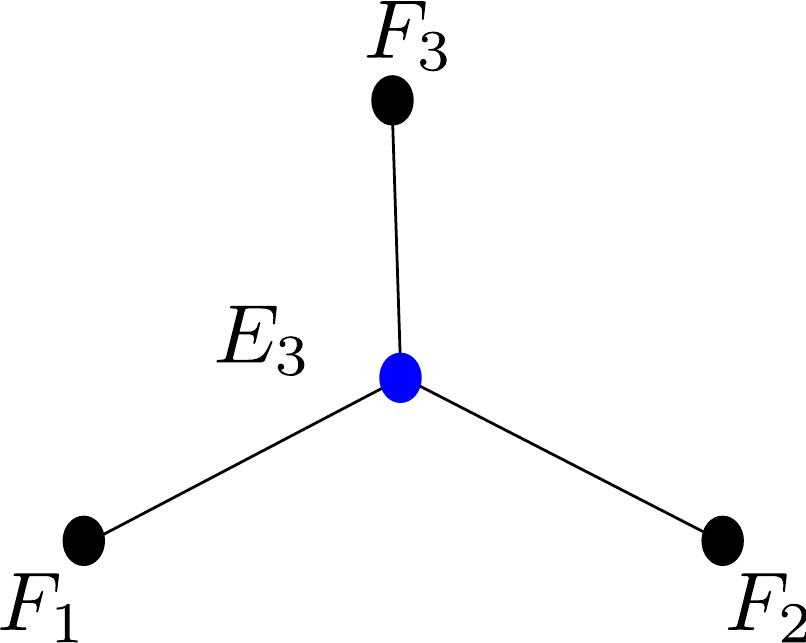}
    \end{minipage}
    \begin{minipage}[c]{.5\linewidth}
      \hspace{20ex} \includegraphics[scale=0.33]{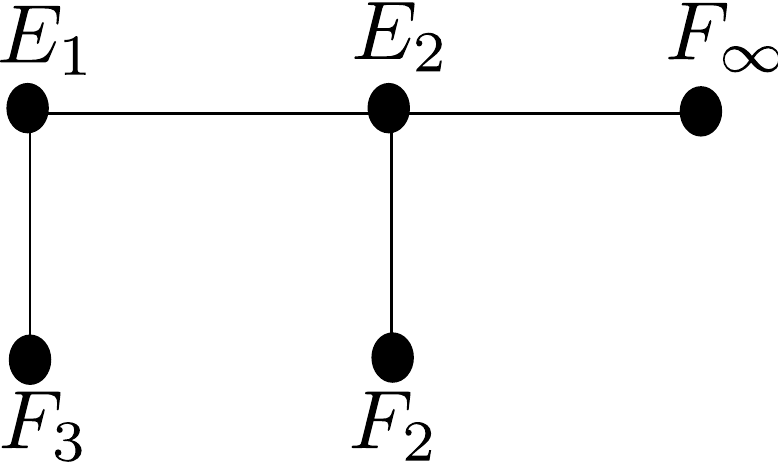}
    \end{minipage}
  \end{minipage}
  \begin{minipage}[c]{.25\linewidth}
\includegraphics[scale=0.8]{BerndExampleNoConstant.0}\hfill
\end{minipage}
 \begin{minipage}[c]{.3\linewidth}
\includegraphics[scale=0.65]
{BerndExampleNoConstant.2}
\end{minipage}
\caption[Newton polytope and dual graph of a non-generic surface in
$\afc^3$.]{From left to right: Resolution diagrams at $(1:2:1)$ and $(0:1:0)$, Newton polytope and dual graph of the non-generic surface
  in $\afc^3$ as in~\eqref{SIeq:6}.}
  \label{SIfig:BerndNonGenericExample}
\end{figure}
\end{example}

As Theorem~\ref{thm:Non-GenericTropSurfaceImplicitization} show, the
transition form the special to the generic case of tropical
implicitization of surfaces can be done at the cost of resolving
excessive intersections of plane curves. In addition to knowing the
resolution diagrams, we need to carry the intersection numbers and
divisorial valuations along the way. The examples presented show how
hard it is to predict the combinatorics of the resolution by looking
at the initial curve arrangement. The final divisorial valuations of
the exceptional divisors heavily depend on the topology of the
original plane curves.

The standard approach to obtain such valuations was introduced in work
of Enriques and Chisini~\cite{EnriqueschisiniII} and further developed
with the notions of Enriques and dual diagrams~\cite{Wall}. Such
methods are based on the topological type of the branches of the
resolved curves. Furthermore, to compute pairwise intersection numbers
of boundary divisors, we need to effectively compute this resolution,
which is difficult to carry out in concrete examples. The main
obstruction to predict these numbers without performing the resolution
lies in the construction of clusters of infinitely near points of each
singularity~\cite{Clusters}. These clusters are precisely the point
configurations emanating from successive blow-ups.

In the last years, a new object combining both Enriques and dual
graphs was introduced by Popescu-Pampu under the name of
\emph{kite}~\cite{kite}. In his language, clusters of infinitely near
points are called constellations. This kite has a natural
interpretation in the valuative tree of Favre and
Jonsson~\cite{ValuativeTree} and it seems to provide the best
framework to study arrangements of plane curves. We hope these tools
will shed some light on tropical implicitization of non-generic
surfaces.

\section*{Acknowledgments}
\label{sec:acknowledgements}
I wish to acknowledge Bernd Sturmfels for suggesting this problem to
me, and to Dustin Cartwright, Alicia Dickenstein, Eric Katz and Diane
Maclagan for fruitful conversations.  Special thanks go to Jenia
Tevelev for discussions on geometric tropicalization, which led to
Theorem~\ref{SIthm:MainTheoremMultiplicitiesSNC}.

{The author was supported by the National Science Foundation
  under the Grant DMS-0757236 (USA), by an Alexander von Humboldt
  Postdoctoral Research Fellowship (Germany) and by the Institut
  Mittag-Leffler (Sweden) as an AXA Mittag-Leffler
  postdoctoral fellow of the Spring 2011 special program ``Algebraic
  Geometry with a view towards applications.'' }

\bibliographystyle{abbrv}
\bibliography{bibliographyMerge}
\label{sec:biblio}

\bigskip

\end{document}
